\newtheorem{theorem}{Theorem}[section]
\newtheorem{lemma}[theorem]{Lemma}
\newtheorem{cor}[theorem]{Corollary}
\newtheorem{defn}[theorem]{Definition}
\newtheorem{Conjecture}{Conjecture}
\newtheorem{Problem}{Problem}
\newtheorem{example}{Example}
\begin{document}
\title
{\LARGE \textbf{On the edge reconstruction of the second immanantal polynomials of undirected graph and digraph\thanks{ supported by NSFC (No. 12261071) and NSF of Qinghai Province (No. 2020-ZJ-920).} }}

\author{ Tingzeng Wu$^{a,b}$\thanks{{Corresponding author.\newline
\emph{E-mail address}: mathtzwu@163.com, g1617069052@163.com, hjlai@math.wvu.edu
}}, Yafan Geng$^a$, Hong-Jian Lai$^c$\\
{\small $^{a}$ School of Mathematics and Statistics, Qinghai Nationalities University, }\\
{\small  Xining, Qinghai 810007, P.R.~China} \\
{\small $^{b}$ Qinghai Institute of Applied Mathematics,   Xining, Qinghai, 810007, P.R.~China} \\
{\small $^{c}$ Department of Mathematics, West Virginia University, Morgantown, WV, USA} }

\date{}

\maketitle
\noindent {\bf Abstract:} Let $M=(m_{ij})$ be an $n\times n$ matrix. The second immanant of matrix $M$ is defined by
\begin{eqnarray*}
d_{2}(M)=\sum_{\sigma\in S_{n}}\chi_{2}(\sigma)\prod_{s=1}^{n}m_{s\sigma(s)},
\end{eqnarray*}
where $\chi_{2}$ is the irreducible character of $S_{n}$ corresponding to the partition $(2^{1},1^{n-2})$. The polynomial $d_{2}(xI-M)$ is called the second immanantal polynomial of matrix $M$. Denote by $D(G)$ (resp. $D(\overrightarrow{G})$) and $A(G)$ (resp. $A(\overrightarrow{G})$) the diagonal matrix of vertex degrees and the adjacency matrix of undirected graph $G$ (resp. digraph $\overrightarrow{G}$), respectively. In this article, we prove that $d_{2}(xI-A(G))$ (resp. $d_{2}(xI-A(\overrightarrow{G}))$) can be reconstructed from the second immanantal polynomials of the adjacency matrix of all subgraphs in $\{G-uv,G-u-v|uv\in E(G)\}$ (resp. $\{\overrightarrow{G}-e|e\in E(\overrightarrow{G})\}$).  Furthermore, the polynomial $d_{2}(xI-D(\overrightarrow{G})\pm A(\overrightarrow{G}))$ can also be reconstructed by the second immanantal polynomials of the (signless) Laplacian matrixs of all subgraphs in $\{\overrightarrow{G}-e|e\in E(\overrightarrow{G})\}$, respectively.

\noindent {\bf Keywords:}   Adjacency matrix; (signless)Laplacian matrix; Ulam-Kelly's reconstruction conjecture; Edge-vertex reconstruction; Edge reconstruction

\section{Introduction}

Let $S_{n}$ be the permutation group on $n$ symbols and $\lambda$ be a partition of $n$, and let $\chi_{\lambda}$ be the irreducible character of $S_{n}$ corresponding to the partition $\lambda$. The {\em immanant function} $d_{\lambda}$ associated with the character $\chi_{\lambda}$ acting on an $n\times n$ matrix $M=[m_{ij}]$ is defined as
\begin{eqnarray*}
d_{\lambda}(M)=\sum_{\sigma\in S_{n}}\chi_{\lambda}(\sigma)\prod_{i=1}^{n}a_{i\sigma(i)}.
\end{eqnarray*}
\noindent B{\"u}rgisser \cite{1} demonstrated that computation of immanant is VNP-complete. The determinant,  second immanant and permanent are the immanants corresponding to the $\lambda=(1^{n})$, $\lambda=(2^{1},1^{n-2})$ and $\lambda=(n)$, respectively.  Let $X=(x_{ij})$ be an $k\times k$ matrix $(k\geq2)$, and let $X(i)$ be the submatrix of $X$ obtained by deleting row and column $i$. The determinant and the second immanant of $X$ are denoted by $\det(X)$ and $d_{2} (X)$, respectively. Then we have
\begin{eqnarray}\label{equ1}
d_{2}(X)=\sum_{i=1}^{k}x_{ii}\det(X(i))-\det(X).
\end{eqnarray}

The {\em immanantal polynomial} of an $n\times n$ matrix $M$ associated with the character $\chi_{\lambda}$ is denoted by
\begin{eqnarray*}
d_{\lambda}(xI-M)=\sum_{k=0}^{n}(-1)^{k}c_{\lambda,k}(M)x^{n-k}.
\end{eqnarray*}

Let $G=(V(G),E(G))$ be an undirected graph without loops or multiple edges, with vertex set $V(G)=\{v_{1},v_{2},\ldots,v_{n}\}$ and edge set $E(G)=\{e_{1},e_{2},\ldots,e_{m}\}$. The number of vertices and edges of $G$ denoted by $|V(G)|=n$ and $|E(G)|=m$, respectively. The adjacency matrix of graph $G$ denoted by $A(G)=(a_{ij})(i,j\in{1,2,\cdots,n})$, where
$$
a_{ij}=
  \begin{cases}
   1, & \text{if $v_{i}$ and $v_{j}$ are adjacent},\\
   0, & \text{otherwise}.
  \end{cases}
$$

Let $\overrightarrow{G}=(V(\overrightarrow{G}),E(\overrightarrow{G}))$ be an digraph without loops or multiple arcs, with vertex set $V(\overrightarrow{G})=\{v_{1},v_{2},\ldots,v_{n}\}$ and arc set $E(\overrightarrow{G})=\{\overrightarrow{e_{1}},\overrightarrow{e_{2}},\ldots,\overrightarrow{e_{m}}\}$. The number of vertices and edges of $\overrightarrow{G}$ denoted by $|V(\overrightarrow{G})|=n$ and $|E(\overrightarrow{G})|=m$, respectively. The adjacency matrix of graph $\overrightarrow{G}$ denoted by $A(\overrightarrow{G})=(a_{ij})_{n\times n}$, where
$$
a_{ij}=
  \begin{cases}
   1, & \text{if $(v_{i},v_{j})\in E(\overrightarrow{G})$},\\
   0, & \text{otherwise}.
  \end{cases}
$$
The vertex in-degree diagonal matrix of $\overrightarrow{G}$ denoted by $D(\overrightarrow{G})=diag(d^{-}(v_{1}),d^{-}(v_{2}),\ldots,d^{-}(v_{n}))$, where $d^{-}(v_{i})$ is the in-degree of $v_{i}$. Then $D(\overrightarrow{G})-A(\overrightarrow{G})$ and $D(\overrightarrow{G})+A(\overrightarrow{G})$ are the Laplacian and the signless Laplacian matrix of graph $\overrightarrow{G}$, respectively.

Denote by $\tau(G;x)$ the second immanantal polynomial of the adjacency matrix of an undirected graph $G$. Then
\begin{eqnarray}\label{equ2}
\tau(G;x)=d_{2} (xI_{n}-A(G)),
\end{eqnarray}
where $I_{n}$ is an $n\times n$ identity matrix. And its theory is well elaborated \cite{RM, WYG, WYFG}.

Let $g_{1}(\overrightarrow{G};x),g_{2}(\overrightarrow{G};x)$ and $g_{3}(\overrightarrow{G};x)$ be the second immanantal polynomials of the adjacency matrix and second immanantal polynomial of the (signless) Laplacian matrix of digraph $\overrightarrow{G}$, respectively. Thus
\begin{eqnarray*}
g_{1}(\overrightarrow{G};x)&=&d_{2} (xI_{n}-A(\overrightarrow{G})),\\
g_{2}(\overrightarrow{G};x)&=&d_{2} (xI_{n}-D(\overrightarrow{G})+A(\overrightarrow{G})),\\
g_{3}(\overrightarrow{G};x)&=&d_{2} (xI_{n}-D(\overrightarrow{G})-A(\overrightarrow{G})).
\end{eqnarray*}

The Ulam-Kelly's vertex reconstruction conjecture \cite{2,3} and Harary's edge reconstruction conjecture \cite{4,5} are long lasting problems in graph theory, as stated below.
\begin{Conjecture}\label{con1}(Ulam-Kelly's vertex reconstruction conjecture \cite{3})
Every simple graph $G$ with vertex set $V(G)$ can be uniquely reconstructed from its vertex deck $\{G-v|v\in V(G)\}$ for $|V(G)|\geqslant 3$.
\end{Conjecture}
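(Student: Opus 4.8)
The plan is to follow the classical counting program initiated by Kelly. The foundational tool is \emph{Kelly's lemma}: for any graph $H$ with $|V(H)|<|V(G)|=n$, the number $s(H,G)$ of subgraphs of $G$ isomorphic to $H$ can be read off the deck through
\begin{eqnarray*}
s(H,G)=\frac{1}{n-|V(H)|}\sum_{v\in V(G)}s(H,G-v),
\end{eqnarray*}
since each fixed copy of $H$ in $G$ survives in precisely those cards $G-v$ whose deleted vertex avoids its $|V(H)|$ vertices, that is, in exactly $n-|V(H)|$ of the $n$ cards. Hence every subgraph count $s(H,G)$ with $|V(H)|<n$ is determined by the deck $\{G-v\mid v\in V(G)\}$. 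First I would extract the elementary numerical invariants: the edge number through $m=\frac{1}{n-2}\sum_{v}|E(G-v)|$, valid for $n\geq3$, and then the full degree sequence, each card supplying the degrees of all but its deleted vertex while the cards jointly fix the one missing value.

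With Kelly's lemma in force I would next recover the finer reconstructible data — connectivity and the number of components, the characteristic polynomial in the manner of Tutte, the number of spanning trees, and the chromatic and Tutte polynomials — and appeal to the known verifications that these, or the subgraph-count vector itself, already determine $G$ on the established reconstructible subclasses: disconnected graphs, trees, regular graphs, and graphs uniquely fixed by their count vector. For $G$ lying in any of these families the deck then reconstructs $G$ uniquely, which discharges the statement in those cases and is as far as the counting framework reaches unconditionally.

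The decisive step is to show that the deck determines $G$ up to isomorphism, i.e.\ that no two non-isomorphic graphs on $n\geq3$ vertices can share a deck. This is exactly the content of the conjecture, and I expect it to be the main — indeed, at present, the insurmountable — obstacle. The counting program delivers every proper subgraph count $s(H,G)$ with $|V(H)|<n$, but Kelly's lemma stops one level short of the full graph: it says nothing about how the $(n-1)$-vertex fragments assemble into $G$, and it is precisely this global assembly that would distinguish any putative counterexamples sharing a common deck. I would therefore expect a complete proof to require a genuinely new ingredient beyond additive counting — for instance an algebraic reconstruction of a complete graph invariant from the deck, or a combinatorial handle on the ``missing'' global term — and, absent such an ingredient, the conjecture remains open for general graphs, the program above settling it only in the classically known special cases.
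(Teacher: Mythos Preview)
The statement you were asked to prove is not a theorem in the paper; it is explicitly stated there as a \emph{conjecture} (Ulam--Kelly's vertex reconstruction conjecture), cited as motivation and background, with no proof attempted or claimed. The paper's own results concern the edge reconstruction of the \emph{second immanantal polynomial}, not the reconstruction of the graph itself, and the conjecture is mentioned only to situate those results historically.

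Your proposal is therefore appropriate in substance: you correctly identify Kelly's lemma as the standard partial tool, correctly list the classical reconstructible invariants and subclasses it yields, and correctly conclude that the full conjecture remains open because the counting method stops one level short of $n$-vertex data. That is an accurate summary of the state of the art, and it matches the paper's own stance, which treats the conjecture as unresolved. There is no ``paper's proof'' to compare against, and no gap in your reasoning beyond the one you already name --- the conjecture itself.
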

\begin{Conjecture}\label{con1}(Harary's edge reconstruction conjecture \cite{4})
Every simple graph $G$ with edge set $E(G)$ can be reconstructed from its edge deck $\{G-e|e\in E(G)\}$ for $|E(G)|\geqslant 4$.
\end{Conjecture}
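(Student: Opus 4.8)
The plan is to follow the classical counting strategy for edge reconstruction, establishing the conjecture unconditionally for graphs that are sufficiently dense and isolating the sparse regime as the genuine obstruction. Throughout, fix the edge deck $\{G-e \mid e\in E(G)\}$ as a multiset of $m$ cards, each a graph on the same vertex set $V(G)$ with $m-1$ edges. First I would record the data that is reconstructible for free: the number of vertices $n$ is read directly from any card, since edge deletion removes no vertices, and the number of edges $m$ equals the number of cards, so $e(G)=m$ is known. The task then reduces to showing that any two graphs $G$ and $H$ on $V(G)$ sharing the same edge deck must be isomorphic.

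The first substantive step is Kelly's lemma in its edge form. For any graph $F$ with $e(F)<m$, the number $s(F,G)$ of subgraphs of $G$ isomorphic to $F$ satisfies
\begin{eqnarray*}
s(F,G)=\frac{1}{m-e(F)}\sum_{e\in E(G)}s(F,G-e),
\end{eqnarray*}
because each copy of $F$ survives in exactly $m-e(F)$ of the cards, namely those whose deleted edge avoids that copy. Consequently every proper subgraph count $s(F,\cdot)$ with $e(F)<m$ is an invariant of the edge deck, and hence $s(F,G)=s(F,H)$ for all such $F$. The only counts not delivered by this lemma are the top-level ones, $s(G,H)$ and $s(H,G)$, recording whether each graph embeds as a copy of the other across the full edge set.

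Next I would convert the agreement of all proper subgraph counts into an isomorphism test via the Nash-Williams alternating sum, equivalently a M\"obius inversion over the subgraph lattice of $K_{n}$. The resulting sign-counting function has the feature that its dominant contribution is, up to sign, the number $|\mathrm{Aut}(G)|$ of automorphisms of $G$, coming from permutations that carry $G$ exactly onto itself, while all remaining terms are governed by proper subgraph counts and therefore cancel identically between $G$ and $H$. Thus $G\cong H$ follows as soon as this alternating sum is nonzero. Lov\'asz's argument gives nonvanishing whenever $m>\tfrac12\binom{n}{2}$, and M\"uller's refinement whenever $2^{\,m-1}>n!$; either bound settles the conjecture throughout the dense regime.

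The hard part is precisely the sparse regime, when $m$ is small relative to $\log_{2}n!$. There the automorphism term no longer dominates the Nash-Williams sum, the error terms can conspire to cancel it, and the counting method yields no conclusion; this is exactly why Harary's conjecture remains open in general, and I do not see how to close that gap by counting alone. I would not expect the immanantal-polynomial machinery developed in this paper to remove the obstruction either, since those results reconstruct a polynomial invariant rather than the full isomorphism type of $G$; rather, the reconstruction theorems proved here are unconditional and stand alongside the conjecture as supporting evidence, strengthening the known determining power of the edge deck without resolving the sparse case.
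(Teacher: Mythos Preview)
The statement you were asked to prove is Harary's edge reconstruction conjecture, which the paper lists as Conjecture~2 and does \emph{not} prove; it is an open problem cited only to motivate the polynomial-reconstruction questions that the paper actually addresses. So there is no ``paper's own proof'' to compare against, and your proposal, to its credit, does not pretend otherwise: you correctly reproduce the edge version of Kelly's lemma, the Nash-Williams/Lov\'asz/M\"uller counting bounds for the dense regime, and then explicitly concede that the sparse case remains out of reach. That is an accurate summary of the state of the art, but it is not a proof of the conjecture, and you should not present it as one. If the intent was to supply a proof, the genuine gap is exactly the one you name: when $2^{m-1}\le n!$ the alternating-sum argument fails, and nothing in your outline (or in the paper) closes it.
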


\noindent Although many results for these two conjectures have been obtained \cite{6,7,8,9}, they are still open. As a spectral counter part to Ulam-Kelly's vertex reconstruction conjecture, Cvetkovi\'{c} posed a polynomial reconstruction problem: When $|V(G)|\geqslant 3$, can the characteristic polynomial of a simple graph $G$ with vertex set $V(G)$ be reconstructed from the characteristic polynomial of all subgraphs in $\{G-v|v\in V(G)\}$? The same problem is also posed by Schwenk \cite{10}. Gutman and Cvetkovi\'{c} \cite{11} obtained some results of this problem. Zhang et al. \cite{12,13} studied the corresponding polynomial edge reconstruction problem: Can the characteristic polynomial of a simple graph $G$ with edge set $E(G)$ be reconstructed from the characteristic polynomial of all subgraphs in $\{G-e|e\in E(G)\}$? They proved that if $|V(G)|\neq |E(G)|$, then the characteristic polynomial (resp. permanental polynomial) of a simple graph $G$ with vertex set $V(G)$ and edge set $E(G)$ can be reconstructed from the characteristic polynomial (resp. permanental polynomial) of all subgraphs in $\{G-uv,G-u-v|uv\in E(G)\}$. They also proved that similar results hold for the Laplacian (resp. signless Laplacian) characteristic polynomial and the Laplacian (resp. signless Laplacian) permanental polynomial of digraph $\overrightarrow{G}$ if $|V(\overrightarrow{G})|\neq |E(\overrightarrow{G})|$. The characteristic polynomial,  permanental polynomial and  second immanantal polynomial are special cases of immanantal polynomials. There exist naturally two problems as follows.

\begin{Problem}\label{prob1}
Can the second immanantal polynomial of undirected graph $G$ satisfy the edge reconstruction conjecture?
\end{Problem}

\begin{Problem}\label{prob2}
 Can the second immanantal polynomial of digraph $\overrightarrow{G}$ satisfy the edge reconstruction conjecture?
\end{Problem}

The current research is motivated by the two problems above. This paper is organized as follows. In the next section, we settle Problem 1 by showing that if $m\neq n$, $\tau(G;x)$ can be reconstructed from $\{\tau(G-v_{s}v_{t};x),\tau(G-v_{s}-v_{t};x)|v_{s}v_{t}\in E(G)\}$. In Section 3, we address Problem 2, and prove that if $m\neq n$, $g_{i}(\overrightarrow{G};x)$ can be reconstructed from $\{g_{i}(\overrightarrow{G}-\overrightarrow{e};x)|\overrightarrow{e}\in E(\overrightarrow{G})\}$ for any $i=1,2,3$. In Section 4, a brief summary is presented and two future research problems are proposed.

\section{A solution of Problem \ref{prob1}}\label{sec2}

In this section, a solution to the Problem \ref{prob1} will be presented. Some notation and lemmas are of importance to the description and proof of our results later, and we list them below.

Let $X$ be an $n\times n$ matrix whose rows and columns are labeled with elements in $\{1,2,\ldots,n\}$. And let $X^{a_{1},\ldots,a_{i}}_{b_{1},\ldots,b_{j}}$ be the submatrix of $X$ obtained by deleting all rows in with labels $\{a_{1},\ldots,a_{i}\}$ and all columns in with labels $\{b_{1},\ldots,b_{j}\}$, where $i,j\in\{1,\ldots,n\}$.

\begin{defn}\label{def2.1}
Let $B=(b_{st})_{n\times n}$ be a symmetric matrix of order $n$ over the complex field, hence $b_{st}=b_{ts}(1\leqslant s,t\leqslant n)$. Define a symmetric matrix $B_{[ij]}=(b_{st}^{ij})_{n\times n}$, where $1\leqslant i,j\leqslant n$, and
$$
b_{st}^{ij}=
  \begin{cases}
   b_{st}, & \text{if $(s,t)\neq(i,j)$ and $(s,t)\neq(j,i)$},\\
   0, & \text{otherwise}.
  \end{cases}
$$
\end{defn}
\begin{example}\label{exam1}
Let $B$ be an $n\times n$ matrix as follows:
$$
B=
\left(
  \begin{array}{cccc}
    b_{11} & b_{12} &  \cdots &  b_{1n} \\
    b_{12} & b_{22} &  \cdots &  b_{2n} \\
    \vdots & \vdots &  \ddots &  \vdots \\
    b_{1n} & b_{2n} &  \cdots &  b_{nn}
  \end{array}
\right).
$$
Then
$$
B_{[12]}=B_{[21]}=
\left(
  \begin{array}{cccc}
    b_{11} & 0 &  \cdots &  b_{1n} \\
    0 & b_{22} &  \cdots &  b_{2n} \\
    \vdots & \vdots &  \ddots &  \vdots \\
    b_{1n} & b_{2n} &  \cdots &  b_{nn}
  \end{array}
\right),
B_{[11]}=
\left(
  \begin{array}{cccc}
    0 & b_{12} &  \cdots &  b_{1n} \\
    b_{12} & b_{22} &  \cdots &  b_{2n} \\
    \vdots & \vdots &  \ddots &  \vdots \\
    b_{1n} & b_{2n} &  \cdots &  b_{nn}
  \end{array}
\right).
$$
\end{example}
\noindent By definition, if $b_{ij}=0$, then $B=B_{ij}=B_{ji}$. Zhang et al. \cite{12} gave a result as follows. For any $1\leqslant s\leqslant n,1\leqslant i\neq j\leqslant n$, then
\begin{eqnarray}\label{equ3}
\det(B_{[ss]})&=&\det(B)-b_{ss}\det(B_{s}^{s}),
\end{eqnarray}
\begin{eqnarray}\label{equ4}
\det(B_{[ij]})&=&\det(B)-(-1)^{i+j}b_{ij}\det(B_{j}^{i})-(-1)^{i+j}b_{ji}\det(B_{i}^{j})-b_{ij}^{2}\det(B_{i,j}^{i,j}).
\end{eqnarray}

\begin{lemma}\label{lem2.2}(\cite{12})
Let $B=(b_{st})_{n\times n}$ and $B_{[ij]}=(b_{st}^{ij})_{n\times n}$ be defined as in Definition \ref{def2.1}. Then the determinant of $B$ satisfies:
\begin{eqnarray*}
\frac{1}{2}(n^{2}-n)\det(B)=\sum_{1\leqslant i\leqslant j\leqslant n}\det(B_{[ij]})+\sum_{1\leqslant i< j\leqslant n}b_{ij}^{2}\det(B_{i,j}^{i,j}).
\end{eqnarray*}
\end{lemma}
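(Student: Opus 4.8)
The plan is to sum the expansion formulas \eqref{equ3} and \eqref{equ4} over all index pairs $1 \leqslant i \leqslant j \leqslant n$ and track how many times $\det(B)$ and the various cofactors appear. First I would count the pairs: there are $n$ diagonal pairs $(i,i)$ and $\binom{n}{2} = \frac{1}{2}(n^2-n)$ strictly upper pairs $(i,j)$ with $i<j$, for a total of $\frac{1}{2}(n^2+n)$ terms. Summing the left sides of \eqref{equ3} over diagonal pairs and \eqref{equ4} over off-diagonal pairs gives $\sum_{1\leqslant i\leqslant j\leqslant n}\det(B_{[ij]})$. On the right side, each such formula contributes one copy of $\det(B)$, so the total $\det(B)$-coefficient is $\frac{1}{2}(n^2+n)$; the remaining terms are the subtracted cofactor terms.

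The key step is then to show that the aggregate of all the subtracted cofactor terms — namely $\sum_{s} b_{ss}\det(B^s_s)$ coming from the diagonal formulas, plus $\sum_{i<j}\big[(-1)^{i+j}b_{ij}\det(B^i_j) + (-1)^{i+j}b_{ji}\det(B^j_i)\big]$ and $\sum_{i<j} b_{ij}^2\det(B^{i,j}_{i,j})$ coming from the off-diagonal formulas — simplifies so that, after moving it to the left, one obtains the stated identity. Precisely, I expect
\[
\sum_{s=1}^{n} b_{ss}\det(B^s_s) + \sum_{1\leqslant i<j\leqslant n}\Big[(-1)^{i+j}b_{ij}\det(B^i_j) + (-1)^{i+j}b_{ji}\det(B^j_i)\Big] = \det(B),
\]
since using symmetry $b_{ij}=b_{ji}$ and $\det(B^i_j)=\det(B^j_i)$ (transpose-invariance of the determinant, as $B$ is symmetric), the bracketed sum is $2\sum_{i<j}(-1)^{i+j}b_{ij}\det(B^i_j)$, and adding $\sum_s b_{ss}\det(B^s_s) = \sum_s (-1)^{2s} b_{ss}\det(B^s_s)$ reassembles exactly the cofactor (Laplace) expansion of $\det(B)$ along, say, the first row summed appropriately — more carefully, $\sum_{i,j}(-1)^{i+j}b_{ij}\det(B^i_j)$ double-counts off-diagonal entries and single-counts diagonal ones, and this double sum equals $n\det(B)$ by expanding along each row; so the left-hand side above equals $n\det(B) - (n-1)\det(B) = \det(B)$ after the correct bookkeeping of which cofactor expansions are being invoked. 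Substituting this back, the right-hand side of the summed formulas becomes $\frac{1}{2}(n^2+n)\det(B) - \det(B) - \sum_{i<j} b_{ij}^2\det(B^{i,j}_{i,j}) = \frac{1}{2}(n^2-n)\det(B) - \sum_{i<j} b_{ij}^2\det(B^{i,j}_{i,j})$, which rearranges to the claimed identity.

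The main obstacle will be getting the signs and the multiplicities in the cofactor bookkeeping exactly right — in particular, confirming that the combination $\sum_s b_{ss}\det(B^s_s) + 2\sum_{i<j}(-1)^{i+j}b_{ij}\det(B^i_j)$ collapses to a single copy of $\det(B)$ rather than to $n\det(B)$ or some other multiple. I would pin this down by writing $\sum_{i,j} (-1)^{i+j} b_{ij}\det(B^i_j) = \sum_{i} \det(B) = n\det(B)$ (the inner sum over $j$ is the expansion of $\det(B)$ along row $i$), splitting it as the diagonal part $\sum_s b_{ss}\det(B^s_s)$ plus twice the strictly-upper part $\sum_{i<j}(-1)^{i+j}b_{ij}\det(B^i_j)$, and reconciling this with the target $\det(B)$; any discrepancy would indicate I need to also invoke the analogous row-$i$-minus-its-diagonal identity or re-examine the exact form in which \cite{12} derived \eqref{equ4}. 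Once the cofactor identity is verified, the rest is pure rearrangement and poses no difficulty.
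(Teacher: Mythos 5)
Your overall strategy---sum (\ref{equ3}) over the $n$ diagonal pairs and (\ref{equ4}) over the $\binom{n}{2}$ off-diagonal pairs, then recognize the subtracted cofactor terms as Laplace expansions of $\det(B)$---is the right one; the paper does not reprove this cited lemma, but this is exactly the style of bookkeeping it uses for the analogous Lemma \ref{lem2.3}. However, your key intermediate identity is false, and the arithmetic you build on it is also false; the two errors do not cancel, you simply forced the computation to land on the stated answer.

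Concretely: the aggregate
\[
\sum_{s=1}^{n} b_{ss}\det(B^s_s) \;+\; \sum_{1\leqslant i<j\leqslant n}\Bigl[(-1)^{i+j}b_{ij}\det(B^i_j) + (-1)^{i+j}b_{ji}\det(B^j_i)\Bigr]
\]
is precisely $\sum_{i=1}^{n}\sum_{j=1}^{n}(-1)^{i+j}b_{ij}\det(B^i_j)$ --- each ordered off-diagonal pair occurs exactly once and each diagonal entry once, so there is no ``double-counting'' to correct for --- and the inner sum over $j$ is the cofactor expansion of $\det(B)$ along row $i$. Hence the aggregate equals $n\det(B)$, not $\det(B)$. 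Your step ``$n\det(B)-(n-1)\det(B)=\det(B)$ after the correct bookkeeping'' has no justification: nothing in the summed formulas supplies the extra $-(n-1)\det(B)$. Compounding this, your final line $\tfrac{1}{2}(n^{2}+n)\det(B)-\det(B)=\tfrac{1}{2}(n^{2}-n)\det(B)$ is false for $n>1$ (the left side is $\tfrac{1}{2}(n^{2}+n-2)\det(B)$). The correct computation is $\tfrac{1}{2}(n^{2}+n)\det(B)-n\det(B)=\tfrac{1}{2}(n^{2}-n)\det(B)$, which is exactly what falls out once the aggregate is identified as $n\det(B)$. You in fact wrote down the decisive check yourself in your closing paragraph ($\sum_{i,j}(-1)^{i+j}b_{ij}\det(B^i_j)=n\det(B)$); carrying it out shows the multiplicity is $n$, and with that single correction the argument closes correctly.
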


There exists a relation between $d_{2}(B)$ and $d_{2}(B_{[ij]})$ as follows.

\begin{lemma}\label{lem2.3}
Let $B=(b_{st})_{n\times n}$ and $B_{[ij]}=(b_{st}^{ij})_{n\times n}$ be defined as in Definition \ref{def2.1}. Then the second immanant of $B$ and $d_{2}(B_{[ij]})$ satisfies:
\begin{eqnarray*}
\frac{1}{2}(n^{2}-n)d_{2}(B)=\sum_{1\leqslant i\leqslant j\leqslant n}d_{2}(B_{[ij]})+\sum_{1\leqslant i< j\leqslant n}b_{ij}^{2}d_{2}(B_{i,j}^{i,j}).
\end{eqnarray*}
\end{lemma}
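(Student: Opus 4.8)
The plan is to reduce the statement about $d_2$ to the corresponding statement about determinants, which is already available as Lemma \ref{lem2.2}. The key observation is formula \eqref{equ1}: for any $k\times k$ matrix $X$ with $k\geqslant 2$ we have $d_2(X)=\sum_{s=1}^k x_{ss}\det(X(s))-\det(X)$, where $X(s)$ is $X$ with row and column $s$ removed. So $d_2$ is a linear combination of determinants of $X$ and of its principal submatrices of order $k-1$, with coefficients that are diagonal entries of $X$. The strategy is therefore: write $d_2(B)$, and each $d_2(B_{[ij]})$ and each $d_2(B_{i,j}^{i,j})$, via \eqref{equ1}; then show that each resulting determinant-identity is an instance of Lemma \ref{lem2.2} (possibly applied to a smaller matrix), so the whole thing follows by taking an appropriate linear combination.

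First I would spell out the diagonal-entry bookkeeping. Note that $B_{[ij]}$ and $B$ have exactly the same diagonal when $i\neq j$, and when $i=j$ the matrix $B_{[ii]}$ differs from $B$ only in that the $(i,i)$ entry is zeroed. So in applying \eqref{equ1} to $d_2(B_{[ij]})$ one must be careful: for $i<j$, $d_2(B_{[ij]})=\sum_s b_{ss}\det\bigl((B_{[ij]})(s)\bigr)-\det(B_{[ij]})$, and for each fixed $s$, the principal submatrix $(B_{[ij]})(s)$ is itself obtained from $B(s)$ by the same "zero-out the $(i,j)$ and $(j,i)$ entries" operation (with relabeled indices), unless $s\in\{i,j\}$, in which case $(B_{[ij]})(s)=B(s)$ because the modified entries have been deleted. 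The same remark applies to $B_{i,j}^{i,j}$: its diagonal is that of $B$ with rows/columns $i,j$ removed.

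Next I would sum over all pairs. Summing \eqref{equ1}-expansions, the left side of the claimed identity becomes $\tfrac12(n^2-n)\bigl[\sum_s b_{ss}\det(B(s))-\det(B)\bigr]$. On the right, collect the pure-determinant terms: $\sum_{i\leqslant j}\det(B_{[ij]})+\sum_{i<j}b_{ij}^2\det(B_{i,j}^{i,j})$ equals $\tfrac12(n^2-n)\det(B)$ by Lemma \ref{lem2.2} applied to $B$ itself — this handles the $-\det(B)$ part of each $d_2$ on the right. It remains to match, for each fixed diagonal index $s$, the coefficient of $b_{ss}$: on the right one gets $b_{ss}$ times a sum of $\det$'s of order-$(n-1)$ submatrices, and I would argue this sum is exactly $\tfrac12((n-1)^2-(n-1))\det(B(s))$ — again by Lemma \ref{lem2.2}, now applied to the $(n-1)\times(n-1)$ matrix $B(s)$ — after correctly accounting for the pairs $(i,j)$ with $s\in\{i,j\}$, for which the "$B_{[ij]}$" term contributes $\det(B(s))$ unmodified and the "$b_{ij}^2\det(B_{i,j}^{i,j})$" term (when it is the pair involving $s$) collapses appropriately. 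Reindexing $\{1,\dots,n\}\setminus\{s\}$ to $\{1,\dots,n-1\}$, the count of pairs not meeting $s$ gives precisely the combinatorial factor $\tfrac12((n-1)^2-(n-1))$, and the pairs meeting $s$ contribute the discrepancy $\tfrac12(n^2-n)-\tfrac12((n-1)^2-(n-1))=n-1$ copies of $\det(B(s))$, which is exactly what one needs so that the $b_{ss}$-coefficients on both sides agree.

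The main obstacle I expect is the careful case analysis for the pairs $(i,j)$ with $s\in\{i,j\}$ and, relatedly, the off-diagonal terms $b_{ij}^2 d_2(B_{i,j}^{i,j})$ when one of the deleted indices coincides with a diagonal index being tracked; one has to verify that the sign conventions and the index-deletion operations genuinely commute (i.e., deleting row/column $s$ then zeroing $(i,j)$ gives the same matrix as zeroing $(i,j)$ then deleting $s$), and that the combinatorial coefficients balance exactly. A clean way to organize this is to prove the identity entrywise as polynomial identities in the $b_{st}$, or equivalently to observe that both sides are multilinear-type expressions and it suffices to check the coefficient of each monomial; but the most transparent route is the two-level application of Lemma \ref{lem2.2} sketched above — once to $B$ for the $\det(B)$ part, and once to each $B(s)$ for the $b_{ss}\det(B(s))$ part — with \eqref{equ1} as the bridge. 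No genuinely new idea beyond Lemma \ref{lem2.2} and \eqref{equ1} is needed; the work is entirely in the bookkeeping.
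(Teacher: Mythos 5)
Your proof is correct, and while it shares the paper's skeleton --- expand every $d_{2}$ via \eqref{equ1} and reduce the whole identity to the determinant identity of Lemma \ref{lem2.2} --- it executes the decisive step differently. Both arguments dispose of the pure-determinant terms by one application of Lemma \ref{lem2.2} to $B$ itself. For the diagonal-prefactor terms, however, the paper proceeds by direct computation: it substitutes the explicit formulas \eqref{equ3} and \eqref{equ4} at the level of the submatrices $B_{u}^{u}$, splits into the cases $i=j$, $u\in\{i,j\}$ and $u\notin\{i,j\}$, and then still needs the cofactor-expansion identity $\sum_{i,j\neq u}(-1)^{i+j}b_{ij}\det(B_{u,j}^{u,i})=(n-1)\det(B_{u}^{u})$ to finish. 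You instead group, for each fixed $s$, all terms carrying the prefactor $b_{ss}$ and recognize them as a second instance of Lemma \ref{lem2.2}, now applied to the principal submatrix $B(s)$, with the pairs meeting $s$ supplying exactly the discrepancy $\tfrac12(n^{2}-n)-\tfrac12(n-1)(n-2)=n-1$ copies of $\det(B(s))$. This double application of Lemma \ref{lem2.2} buys a cleaner argument: the $(-1)^{i+j}$ sign bookkeeping and the auxiliary cofactor identity disappear entirely, at the price of the (correct, and correctly flagged) observations that zeroing the $(i,j)$ and $(j,i)$ entries commutes with deleting row and column $s$ when $s\notin\{i,j\}$, is absorbed when $s\in\{i,j\}$, and that the $u=s$ term for the pair $(s,s)$ vanishes, which is precisely what brings the count of unmodified copies of $\det(B(s))$ down from $n$ to $n-1$. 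Your arithmetic checks out throughout.
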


\begin{proof}
Denote by $b_{uu}^{*}$ the $(u,u)$-entry in matrix $B_{[ij]}$. By (\ref{equ1}) and Lemma \ref{lem2.2}, we have
\begin{eqnarray}\label{equ5}
&~&\sum_{1\leqslant i\leqslant j\leqslant n}d_{2}(B_{[ij]})\notag \\
&=&\sum_{1\leqslant i\leqslant j\leqslant n}\sum_{u=1}^{n}b_{uu}^{*}\det((B_{[ij]})_{u}^{u})-\sum_{1\leqslant i\leqslant j\leqslant n}\det(B_{[ij]})\notag \\
&=&\sum_{1\leqslant i\leqslant j\leqslant n}\sum_{u=1}^{n}b_{uu}^{*}\det((B_{[ij]})_{u}^{u})-(\frac{n^{2}-n}{2})\det (B)+\sum_{1\leqslant i<j\leqslant n}b_{ij}^{2}\det(B_{i,j}^{i,j}),
\end{eqnarray}
\begin{eqnarray}\label{equ6}
\sum_{1\leqslant i\leqslant j\leqslant n}\sum_{u=1}^{n}b_{uu}^{*}\det((B_{[ij]})_{u}^{u})=\sum_{i=1}^{n}\sum_{u=1}^{n}b_{uu}^{*}\det((B_{[ii]})_{u}^{u})
+\sum_{1\leqslant i< j\leqslant n}\sum_{u=1}^{n}b_{uu}^{*}\det((B_{[ij]})_{u}^{u}).
\end{eqnarray}
For simplicity of presentation, define
$$
\Delta_{1}=\sum_{i=1}^{n}\sum_{u=1}^{n}b_{uu}^{*}\det((B_{[ii]})_{u}^{u})~{\rm and}~
\Delta_{2}=\sum_{1\leqslant i< j\leqslant n}\sum_{u=1}^{n}b_{uu}^{*}\det((B_{[ij]})_{u}^{u}).
$$
Thus (\ref{equ6}) becomes $\sum_{1\leqslant i\leqslant j\leqslant n}\sum_{u=1}^{n}b_{uu}^{*}\det((B_{[ij]})_{u}^{u})=\Delta_{1}+\Delta_{2}$. We note that if $u=i$ then $\Delta_{1}=0$. Assume that $u\neq i$. By (\ref{equ3}), we have
\begin{eqnarray*}
\Delta_{1}&=&\sum_{i=1}^{n}\sum_{u=1}^{n}b_{uu}\det(B_{u}^{u})-\sum_{i=1}^{n}\sum_{u=1}^{n}b_{uu}b_{ii}\det(B_{u,i}^{u,i})\\
&=&(n-1)\sum_{u=1}^{n}b_{uu}\det(B_{u}^{u})-\sum_{i=1}^{n}\sum_{u=1}^{n}b_{uu}b_{ii}\det(B_{u,i}^{u,i}).
\end{eqnarray*}
For $\Delta_{2}$, we note that if $u=i$ or $j$ then $$\Delta_{2}=(n-1)\sum_{u=1}^{n}b_{uu}\det(B_{u}^{u}).$$
If $u\neq i,j$, then by (\ref{equ4}), we have
\begin{eqnarray*}
\Delta_{2}&=&\sum_{1\leqslant i<j\leqslant n}\sum_{u=1}^{n}b_{uu}\det(B_{u}^{u}) -\sum_{1\leqslant i<j\leqslant n}\sum_{u=1}^{n}b_{uu}(-1)^{i+j}b_{ij}\det(B_{u,j}^{u,i})\\
&~&-\sum_{1\leqslant i<j\leqslant n}\sum_{u=1}^{n}b_{uu}(-1)^{i+j}b_{ji}\det(B_{u,i}^{u,j})-\sum_{1\leqslant i<j\leqslant n}\sum_{u=1}^{n}b_{uu}b_{ij}^{2}\det(B_{u,i,j}^{u,i,j})\\
&=&(\frac{n^{2}-3n+2}{2})\sum_{u=1}^{n}b_{uu}\det(B_{u}^{u}) -\sum_{1\leqslant i<j\leqslant n}\sum_{u=1}^{n}b_{uu}(-1)^{i+j}b_{ij}\det(B_{u,j}^{u,i})\\
&~&-\sum_{1\leqslant i<j\leqslant n}\sum_{u=1}^{n}b_{uu}(-1)^{i+j}b_{ji}\det(B_{u,i}^{u,j})-\sum_{1\leqslant i<j\leqslant n}\sum_{u=1}^{n}b_{uu}b_{ij}^{2}\det(B_{u,i,j}^{u,i,j}).
\end{eqnarray*}
Combining arguments above, we obtain that
\begin{eqnarray}\label{equ7}
&~&\sum_{1\leqslant i\leqslant j\leqslant n}\sum_{u=1}^{n}b_{uu}^{*}\det((B_{[ij]})_{u}^{u})\notag \\ &=&(\frac{n^{2}+n-2}{2})\sum_{u=1}^{n}b_{uu}\det(B_{u}^{u})-\sum_{\substack{1\leqslant i,j\leqslant n\\i,j\neq u}} \sum_{u=1}^{n}b_{uu}(-1)^{i+j}b_{ij}\det(B_{u,j}^{u,i})\notag \\
&~&-\sum_{\substack{1\leqslant i<j\leqslant n\\i,j\neq u}}\sum_{u=1}^{n}b_{uu}b_{ij}^{2}\det(B_{u,i,j}^{u,i,j}).
\end{eqnarray}
By substituting (\ref{equ7}) into (\ref{equ5}), we obtain that
\begin{eqnarray*}
&~&\sum_{1\leqslant i\leqslant j\leqslant n}d_{2}(B_{[ij]})\\
&=&(\frac{n^{2}+n-2}{2})\sum_{u=1}^{n}b_{uu}\det(B_{u}^{u})-\sum_{\substack{1\leqslant i,j\leqslant n\\i,j\neq u}} \sum_{u=1}^{n}b_{uu}(-1)^{i+j}b_{ij}\det(B_{u,j}^{u,i})\\
&~&-\sum_{\substack{1\leqslant i<j\leqslant n\\i,j\neq u}}\sum_{u=1}^{n}b_{uu}b_{ij}^{2}\det(B_{u,i,j}^{u,i,j})-(\frac{n^{2}-n}{2})\det (B)+\sum_{1\leqslant i<j\leqslant n}b_{ij}^{2}\det(B_{i,j}^{i,j})\\
&=&(\frac{n^{2}-n}{2})d_{2}(B)+(n-1)\sum_{u=1}^{n}b_{uu}\det(B_{u}^{u})-\sum_{\substack{1\leqslant i,j\leqslant n\\i,j\neq u}} \sum_{u=1}^{n}b_{uu}(-1)^{i+j}b_{ij}\det(B_{u,j}^{u,i})\\
&~&-\sum_{1\leqslant i<j\leqslant n}b_{ij}^{2}d_{2}(B_{i,j}^{i,j})\\
&=&(\frac{n^{2}-n}{2})d_{2}(B)+(n-1)\sum_{u=1}^{n}b_{uu}\det(B_{u}^{u})-\sum_{u=1}^{n} b_{uu}\sum^{n}_{\substack{i=1\\i\neq u}}\sum^{n}_{\substack{j=1\\j\neq u}}(-1)^{i+j}b_{ij}\det(B_{u,j}^{u,i})\\
&~&-\sum_{1\leqslant i<j\leqslant n}b_{ij}^{2}d_{2}(B_{i,j}^{i,j})\\
&=&(\frac{n^{2}-n}{2})d_{2}(B)+(n-1)\sum_{u=1}^{n}b_{uu}\det(B_{u}^{u})-\sum_{u=1}^{n}b_{uu}(n-1)\det(B_{u}^{u})\\
&~&-\sum_{1\leqslant i<j\leqslant n}b_{ij}^{2}d_{2}(B_{i,j}^{i,j})\\
&=&(\frac{n^{2}-n}{2})d_{2}(B)-\sum_{1\leqslant i<j\leqslant n}b_{ij}^{2}d_{2}(B_{i,j}^{i,j}).
\end{eqnarray*}
\end{proof}

\begin{lemma}\label{lem2.4}
Let $B=(b_{st})_{n\times n}$ and $B_{[ij]}=(b_{st}^{ij})_{n\times n}$ be defined as in Definition \ref{def2.1}, and let $2m$ be the number of non-zero non-diagonal entries of $B$, and let $c$ be the number of 0 entries in the diagonal of $B$. Then
\begin{eqnarray*}
(m-c)d_{2}(B)=\sum_{(i,j)\in I_{1}}d_{2}(B_{[ij]})+\sum_{(i,j)\in I_{2}}b_{ij}^{2}d_{2}(B_{i,j}^{i,j}),
\end{eqnarray*}
where $I_{1}=\{(i,j)|b_{ij}\neq0,1\leqslant i\leqslant j\leqslant n\},~I_{2}=\{(i,j)|b_{ij}\neq0,1\leqslant i< j\leqslant n\}$.
\end{lemma}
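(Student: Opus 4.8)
The plan is to reduce Lemma~\ref{lem2.4} to Lemma~\ref{lem2.3} by isolating, on both sides of the identity in Lemma~\ref{lem2.3}, the contribution of the ``trivial'' index pairs, namely those $(i,j)$ for which $b_{ij}=0$. The key observation (recorded in the remark just before Lemma~\ref{lem2.2}) is that if $b_{ij}=0$ then $B_{[ij]}=B$, and in particular $d_{2}(B_{[ij]})=d_{2}(B)$; moreover the off-diagonal terms $b_{ij}^{2}d_{2}(B_{i,j}^{i,j})$ vanish whenever $b_{ij}=0$. So I would split the double sums in Lemma~\ref{lem2.3} along the partition of $\{(i,j):1\le i\le j\le n\}$ into the pairs with $b_{ij}\ne 0$ and the pairs with $b_{ij}=0$, handling the diagonal ($i=j$) and strictly-off-diagonal ($i<j$) cases separately.

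First I would count the pairs carefully. Among the diagonal pairs $(i,i)$ with $1\le i\le n$, by definition exactly $c$ of them have $b_{ii}=0$ and the remaining $n-c$ have $b_{ii}\ne 0$. Among the strictly-off-diagonal pairs $(i,j)$ with $1\le i<j\le n$, since $B$ is symmetric and has $2m$ nonzero off-diagonal entries, exactly $m$ of these pairs have $b_{ij}\ne 0$; the rest contribute nothing new. Hence in the left-hand sum $\sum_{1\le i\le j\le n} d_{2}(B_{[ij]})$, every pair with $b_{ij}=0$ contributes exactly $d_{2}(B)$, and there are $\binom{n}{2}+c$ such pairs (the $c$ zero-diagonal pairs plus the $\binom{n}{2}-m$ zero-off-diagonal pairs), so
\begin{eqnarray*}
\sum_{1\leqslant i\leqslant j\leqslant n}d_{2}(B_{[ij]})
=\sum_{(i,j)\in I_{1}}d_{2}(B_{[ij]})+\Bigl(\tbinom{n}{2}+c-m\Bigr)d_{2}(B).
\end{eqnarray*}
Similarly $\sum_{1\le i<j\le n} b_{ij}^{2}d_{2}(B_{i,j}^{i,j})=\sum_{(i,j)\in I_{2}} b_{ij}^{2}d_{2}(B_{i,j}^{i,j})$ since the omitted terms are zero.

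Substituting both of these into the identity of Lemma~\ref{lem2.3}, the left side is $\frac{1}{2}(n^{2}-n)d_{2}(B)=\binom{n}{2}d_{2}(B)$, and after moving the term $(\binom{n}{2}+c-m)d_{2}(B)$ to the left the coefficient of $d_{2}(B)$ becomes $\binom{n}{2}-\binom{n}{2}-c+m=m-c$, which is exactly the claimed identity
\begin{eqnarray*}
(m-c)d_{2}(B)=\sum_{(i,j)\in I_{1}}d_{2}(B_{[ij]})+\sum_{(i,j)\in I_{2}}b_{ij}^{2}d_{2}(B_{i,j}^{i,j}).
\end{eqnarray*}
I expect no genuine obstacle here: the only thing to be careful about is the bookkeeping of which pairs are ``trivial'' and verifying that $B_{[ij]}=B$ precisely when $b_{ij}=0$ (diagonal case $b_{ii}=0$ included), together with the observation that the right-hand ``$b_{ij}^{2}$'' terms kill themselves on the zero pairs, so that no separate argument is needed for them. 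This is the main (and essentially only) step, and it is purely combinatorial counting on top of the already-established Lemma~\ref{lem2.3}.
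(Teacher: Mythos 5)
Your proof is correct and follows essentially the same route as the paper: both apply Lemma~\ref{lem2.3}, split the sum over $1\le i\le j\le n$ according to whether $b_{ij}=0$, use $B_{[ij]}=B$ for the zero pairs, and count those pairs (your count $\binom{n}{2}-m+c$ is the paper's $k+c$ with $k=\binom{n}{2}-m$). No gaps.
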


\begin{proof}
Let $2k$ be the number of 0-entries not in the diagonal of $B$. Then $2m=n^{2}-n-2k$. By Lemma \ref{lem2.3}, we have
\begin{eqnarray*}
\frac{1}{2}(n^{2}-n)d_{2}(B)
&=&\sum_{1\leqslant i\leqslant j\leqslant n}d_{2}(B_{[ij]})+\sum_{1\leqslant i< j\leqslant n}b_{ij}^{2}d_{2}(B_{i,j}^{i,j})\\
&=&\sum_{\substack{1\leqslant i\leqslant j\leqslant n\\b_{ij}\neq0}}d_{2}(B_{[ij]})+\sum_{\substack{1\leqslant i\leqslant j\leqslant n\\b_{ij}=0}}d_{2}(B_{[ij]})+\sum_{\substack{1\leqslant i< j\leqslant n\\b_{ij}\neq0}}b_{ij}^{2}d_{2}(B_{i,j}^{i,j})\\
&=&\sum_{(i,j)\in I_{1}}d_{2}(B_{[ij]})+\sum_{(i,j)\in I_{2}}b_{ij}^{2}d_{2}(B_{i,j}^{i,j})+(c+k)d_{2}(B).
\end{eqnarray*}
Hence
\begin{eqnarray*}
(m-c)d_{2}(B)=\sum_{(i,j)\in I_{1}}d_{2}(B_{[ij]})+\sum_{(i,j)\in I_{2}}b_{ij}^{2}d_{2}(B_{i,j}^{i,j}).
\end{eqnarray*}

The proof is completed.
\end{proof}

Let $\tau(G;x)$ be defined as in (\ref{equ2}). A routine computation gives rise to the derivative $\tau'(G;x)$ of $\tau(G;x)$:
\begin{eqnarray}\label{equ8}
\tau'(G;x)&=&[\sum_{u=1}^{n}x\det[(xI_{n}-A(G))_{u}^{u}]]'-[\det(xI_{n}-A(G))]'\notag \\
&=&\sum_{u=1}^{n}[\det[(xI_{n}-A(G))_{u}^{u}]+x\sum_{u=1}^{n}\sum_{\substack{i=1\\i\neq u}}^{n}\det[(xI_{n}-A(G))_{i,u}^{i,u}]]\notag \\
&~&-\sum_{u=1}^{n}\det[(xI_{n}-A(G))_{u}^{u}]\notag \\
&=&\sum_{u=1}^{n}\det[(xI_{n}-A(G))_{u}^{u}]+\sum_{u=1}^{n}d_{2}[(xI_{n}-A(G))_{u}^{u}].
\end{eqnarray}

\noindent We obtain a relationship between $\tau(G;x)$ and $\tau'(G;x)$ as follows.
\begin{lemma}\label{lem2.5}
Let $G=(V(G),E(G))$ be an undirected graph without loops or multiple edges, with vertex set $V(G)=\{v_{1},v_{2},\ldots,v_{n}\}$ and edge set $E(G)=\{e_{1},e_{2},\ldots,e_{m}\}$, and let $\tau(G;x)$ be defined as in (\ref{equ2}). Then
\begin{eqnarray*}
(m-n)\tau(G;x)+x\tau'(G;x)=\sum_{v_{s}v_{t}\in E(G)}[\tau(G-v_{s}v_{t};x)+\tau(G-v_{s}-v_{t};x)].
\end{eqnarray*}
\end{lemma}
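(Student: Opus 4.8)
The plan is to apply Lemma~\ref{lem2.4} to the symmetric matrix $B=xI_{n}-A(G)$ and then turn the auxiliary terms it produces into $x\tau'(G;x)$ by means of \eqref{equ8}. First I would set up the dictionary between the matrix operations of Definition~\ref{def2.1} and the graph operations appearing in the statement. Since $G$ has no loops, every diagonal entry of $B$ equals $x$, so (viewing the entries as polynomials in $x$) the number $c$ of zero diagonal entries is $0$, while each edge $v_{s}v_{t}$ contributes exactly the pair $b_{st}=b_{ts}=-1$, so the number of nonzero off-diagonal entries is $2m$. Moreover $d_{2}(B)=\tau(G;x)$; zeroing the entries $b_{st},b_{ts}$ turns $B$ into $xI_{n}-A(G-v_{s}v_{t})$, whence $d_{2}(B_{[st]})=\tau(G-v_{s}v_{t};x)$; and $B_{s,t}^{s,t}=xI_{n-2}-A(G-v_{s}-v_{t})$, whence $d_{2}(B_{s,t}^{s,t})=\tau(G-v_{s}-v_{t};x)$ with $b_{st}^{2}=1$. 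Splitting $I_{1}$ into the $n$ diagonal pairs $(i,i)$ together with the edge pairs, and noting that $I_{2}$ is exactly the set of edge pairs, Lemma~\ref{lem2.4} (with $m-c=m$) becomes
\[
m\,\tau(G;x)=\sum_{i=1}^{n}d_{2}(B_{[ii]})+\sum_{v_{s}v_{t}\in E(G)}\bigl[\tau(G-v_{s}v_{t};x)+\tau(G-v_{s}-v_{t};x)\bigr].
\]

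It then remains to show that $\sum_{i=1}^{n}d_{2}(B_{[ii]})=n\,\tau(G;x)-x\tau'(G;x)$. To compute a single term, apply \eqref{equ1} to $B_{[ii]}$, using that its $(i,i)$-entry is $0$ and its other diagonal entries are $x$; then apply \eqref{equ3} to $\det(B_{[ii]})$ (viewed as $B$ with $b_{ii}$ zeroed) and to each $\det\bigl((B_{[ii]})_{u}^{u}\bigr)$ with $u\neq i$ (viewed as $B_{u}^{u}$ with its $i$-labelled diagonal entry zeroed). This gives
\[
d_{2}(B_{[ii]})=x\sum_{u=1}^{n}\det(B_{u}^{u})-\det(B)-x^{2}\sum_{u\neq i}\det(B_{u,i}^{u,i})=\tau(G;x)-x^{2}\sum_{u\neq i}\det(B_{u,i}^{u,i}),
\]
where the second equality uses $\tau(G;x)=d_{2}(B)=x\sum_{u}\det(B_{u}^{u})-\det(B)$ from \eqref{equ1}. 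Summing over $i$ yields $\sum_{i=1}^{n}d_{2}(B_{[ii]})=n\,\tau(G;x)-x^{2}\sum_{i=1}^{n}\sum_{u\neq i}\det(B_{u,i}^{u,i})$.

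Finally I would identify the double sum: expanding $d_{2}(B_{u}^{u})$ by \eqref{equ1} gives $d_{2}(B_{u}^{u})=x\sum_{i\neq u}\det(B_{u,i}^{u,i})-\det(B_{u}^{u})$, so the identity \eqref{equ8} collapses to $\tau'(G;x)=x\sum_{u=1}^{n}\sum_{i\neq u}\det(B_{u,i}^{u,i})$, and hence $x\tau'(G;x)=x^{2}\sum_{i=1}^{n}\sum_{u\neq i}\det(B_{u,i}^{u,i})$, since the submatrix $B_{u,i}^{u,i}$ is unchanged under swapping $u$ and $i$. Therefore $\sum_{i=1}^{n}d_{2}(B_{[ii]})=n\,\tau(G;x)-x\tau'(G;x)$; substituting this into the displayed identity above and rearranging gives $(m-n)\tau(G;x)+x\tau'(G;x)=\sum_{v_{s}v_{t}\in E(G)}[\tau(G-v_{s}v_{t};x)+\tau(G-v_{s}-v_{t};x)]$, completing the proof. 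The only real difficulty is the bookkeeping in evaluating $\sum_{i}d_{2}(B_{[ii]})$: one must keep careful track of which principal submatrices have had a diagonal entry replaced by $0$ and apply \eqref{equ1} and \eqref{equ3} consistently, but no idea beyond Lemma~\ref{lem2.4} and \eqref{equ8} is required.
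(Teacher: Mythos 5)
Your proof is correct and follows essentially the same route as the paper's: apply Lemma~\ref{lem2.4} to $B=xI_{n}-A(G)$ (with $c=0$ and the diagonal pairs of $I_1$ producing the $B_{[ii]}$ terms), evaluate $\sum_{i}d_{2}(B_{[ii]})$, and identify the leftover with $n\tau(G;x)-x\tau'(G;x)$ via \eqref{equ8}. Your intermediate form $d_{2}(B_{[ii]})=\tau(G;x)-x^{2}\sum_{u\neq i}\det(B_{u,i}^{u,i})$ is just the paper's $\tau(G;x)-x\det(B_i^i)-xd_2(B_i^i)$ with $d_2(B_i^i)$ expanded by \eqref{equ1}, so the two arguments coincide step for step.
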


\begin{proof}
Since $G$ has $m$ edges, $xI_{n}-A(G)$ has $2m$ non-zero non-diagonal entries, and every diagonal entry is not equal to zero. The adjacency matrix of $G-e$ is denoted by $A(G-e)$, and $I_{n}^{(i)}$ is an $n\times n$ identity matrix with the $i$-th diagonal entry being equal to 0. Denote by $x^{*}_{uu}$ the $(u,u)$-entry in matrix $xI_{n}^{(i)}-A(G)$. By Lemma \ref{lem2.4}, we have
\begin{eqnarray}\label{equ9}
m\cdot d_{2}(xI_{n}-A(G))&=&\sum_{i=1}^{n}d_{2}(xI_{n}^{(i)}-A(G))+\sum_{e\in E(G)}d_{2}(xI_{n}-A(G-e))\notag \\
&~&+\sum_{v_{s}v_{t}\in E(G)}(a_{st})^{2}d_{2}[(xI_{n}-A(G))_{s,t}^{s,t}].
\end{eqnarray}
For simplicity of presentation, let $\Delta_{3}=d_{2}(xI_{n}^{(i)}-A(G))$ and $\Delta_{4}=d_{2}(xI_{n}-A(G-e))$ in (\ref{equ9}). By (\ref{equ1}), we have
\begin{eqnarray*}
\Delta_{3}=\sum_{u=1}^{n}x^{*}_{uu}\det[(xI_{n}^{(i)}-A(G))_{u}^{u}]-\det(xI_{n}^{(i)}-A(G)).
\end{eqnarray*}
In $\Delta_{3}$, we note that if $u=i$ then
\begin{eqnarray*}
\sum_{u=1}^{n}x^{*}_{uu}\det[(xI_{n}^{(i)}-A(G))_{u}^{u}]=0,
\end{eqnarray*}
and if $u\neq i$ then
\begin{eqnarray*}
&~&\sum_{u=1}^{n}x^{*}_{uu}\det[(xI_{n}^{(i)}-A(G))_{u}^{u}]\\
&=&\sum_{u=1}^{n}x^{*}_{uu}[\det[(xI_{n}-A(G))_{u}^{u}]-x\det[(xI_{n}-A(G))_{i,u}^{i,u}]].
\end{eqnarray*}
Hence
\begin{eqnarray*}
\Delta_{3}&=&\sum_{\substack{u=1\\u\neq i}}^{n}x^{*}_{uu}\det[(xI_{n}-A(G))_{u}^{u}]-\sum_{\substack{u=1\\u\neq i}}^{n}x^{*}_{uu}x\det[(xI_{n}-A(G))_{i,u}^{i,u}]\\
&~&-\det(xI_{n}-A(G))+x\det[(xI_{n}-A(G))_{i}^{i}]\\
&=&d_{2}(xI_{n}-A(G))-x\det[(xI_{n}-A(G))_{i}^{i}]-x d_{2}[(xI_{n}-A(G))_{i}^{i}],\\
\Delta_{4}&=&\tau(G-e;x).
\end{eqnarray*}
Combining arguments above, we have
\begin{eqnarray}\label{equ10}
&~&\sum_{i=1}^{n}d_{2}(xI_{n}^{(i)}-A(G))\notag \\
&=&\sum_{i=1}^{n}[d_{2}(xI_{n}-A(G))-x\det[(xI_{n}-A(G))_{i}^{i}]-x d_{2}[(xI_{n}-A(G))_{i}^{i}]]\notag \\
&=&n\tau(G;x)-x\sum_{i=1}^{n}\det[(xI_{n}-A(G))_{i}^{i}]-x\sum_{i=1}^{n}d_{2}[(xI_{n}-A(G))_{i}^{i}].
\end{eqnarray}
\begin{eqnarray}\label{equ11}
\sum_{e\in E(G)}d_{2}(xI_{n}-A(G-e))=\sum_{e\in E(G)}\tau(G-e;x).
\end{eqnarray}
Combining (\ref{equ8}), (\ref{equ9}), (\ref{equ10}) and (\ref{equ11}), we have
\begin{eqnarray*}
&~&m\tau(G;x)\\
&=&n\tau(G;x)-x\tau'(G;x)+\sum_{e\in E(G)}\tau(G-e;x)+\sum_{v_{s}v_{t}\in E(G)}a_{st}^{2}d_{2}[(xI_{n}-A(G))_{s,t}^{s,t}].
\end{eqnarray*}
Then
\begin{eqnarray}\label{equ12}
(m-n)\tau(G;x)+x\tau'(G;x)&=&\sum_{e\in E(G)}\tau(G-e;x)+\sum_{v_{s}v_{t}\in E(G)}d_{2}[(xI_{n}-A(G))_{s,t}^{s,t}]\notag\\
&=&\sum_{v_{s}v_{t}\in E(G)}[\tau(G-v_{s}v_{t};x)+\tau(G-v_{s}-v_{t};x)].
\end{eqnarray}

This completes the proof of Lemma \ref{lem2.5}.
\end{proof}

Checking (\ref{equ12}), we find that if $m\neq n$ then the following equation holds.
\begin{eqnarray*}
\tau(G;0)=\frac{1}{m-n}[\sum_{e\in E(G)}\tau(G-e;0)+\sum_{v_{s}v_{t}\in E(G)}\tau(G-v_{s}-v_{t};0)].
\end{eqnarray*}
Hence the differential equation (\ref{equ12}) has an unique solution. Then imply the following result.

\begin{theorem}\label{thm1}
Let $G$ be an undirected graph without loops or multiple
edges, where vertex set $V(G)=\{v_{1},v_{2},\ldots,v_{n}\}$ and edge set $E(G)=\{e_{1},e_{2},\ldots,e_{m}\}$. If $m\neq n$, then $\tau(G;x)$ can be reconstructed from $\{\tau(G-e;x)|e\in E(G)\}\cup\{\tau(G-v_{s}-v_{t};x)|v_{s}v_{t}\in E(G)\}$.
\end{theorem}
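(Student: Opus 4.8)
The plan is to derive Theorem~\ref{thm1} as an immediate consequence of Lemma~\ref{lem2.5}, treating the identity there as a first-order linear ordinary differential equation in the unknown polynomial $\tau(G;x)$. First I would observe that the right-hand side
$$
R(x):=\sum_{v_{s}v_{t}\in E(G)}\bigl[\tau(G-v_{s}v_{t};x)+\tau(G-v_{s}-v_{t};x)\bigr]
$$
is a known quantity: every summand is the second immanantal polynomial of a member of the deck $\{G-e\}\cup\{G-v_s-v_t\}$, so $R(x)$ is completely determined by the data we are allowed to use. Thus Lemma~\ref{lem2.5} reads $(m-n)\tau(G;x)+x\,\tau'(G;x)=R(x)$, a linear ODE whose only unknown is $\tau(G;x)$, and the whole task reduces to showing this ODE pins down $\tau$ uniquely within the class of polynomials.

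The key steps, in order, are as follows. Write $\tau(G;x)=\sum_{k=0}^{n}a_k x^{n-k}$ with unknown coefficients $a_k$ (we know the degree is $n=|V(G)|$ since $\tau(G;x)=d_2(xI_n-A(G))$ is monic of degree $n$), and write $R(x)=\sum_{k=0}^{n}r_k x^{n-k}$ with the $r_k$ known. Substituting into $(m-n)\tau+x\tau'$ and collecting the coefficient of $x^{n-k}$ gives $(m-n)a_k+(n-k)a_k=(m-k)a_k$, so comparing coefficients yields $(m-k)a_k=r_k$ for every $k=0,1,\dots,n$. Since $m\neq n$, the factor $m-k$ vanishes for at most one index in the range $k=0,\dots,n$, namely $k=m$, and this happens only when $0\le m\le n$; for every other $k$ we recover $a_k=r_k/(m-k)$ directly, so all coefficients except possibly $a_m$ are reconstructed. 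It remains to handle the single potentially degenerate coefficient $a_m$ when $0\le m\le n$: here I would use the companion bookkeeping identity already flagged in the text just before the theorem, namely evaluating Lemma~\ref{lem2.5} at a convenient point or matching one further known relation. Concretely, the constant term satisfies $\tau(G;0)=\frac{1}{m-n}\bigl[\sum_{e}\tau(G-e;0)+\sum\tau(G-v_s-v_t;0)\bigr]$ when $m\neq n$ (this is the $k=n$ instance, legitimate because $m-n\neq 0$), and more generally any coefficient $a_j$ with $j\neq m$ is already known; one then extracts $a_m$ from an independent structural fact about $\tau(G;x)$—for instance, that its subleading coefficients are explicit symmetric functions of the vertex degrees and of $m$, all of which are themselves reconstructible from the deck (the number of edges of $G$ is recoverable since each card $G-e$ has $m-1$ edges). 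Assembling $\tau(G;x)=\sum_k a_k x^{n-k}$ from the now-determined coefficients completes the reconstruction.

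The main obstacle is exactly the resonant coefficient $a_m$: when $m$ happens to lie in $\{0,1,\dots,n\}$ the linear system from coefficient comparison is one equation short, and the ODE alone cannot see $a_m$ because $x^{n-m}$ lies in the kernel of the operator $f\mapsto (m-n)f+xf'$. I expect this to be the only genuinely delicate point, and the resolution is to supply one extra scalar equation from outside Lemma~\ref{lem2.5}. The cleanest route is to note that the low-order immanantal coefficients $c_{2,k}(A(G))$ are known polynomial functions of the graph's degree sequence and edge count (indeed $\tau(G;x)$, like the characteristic polynomial, has $x^{n-1}$-coefficient $0$ and $x^{n-2}$-coefficient $-m$ up to sign, etc.), and each such datum is invariant under the reconstruction or directly readable from the cards; picking the one that involves $a_m$ nontrivially closes the gap. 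Once that single coefficient is recovered, every $a_k$ is determined, so $\tau(G;x)$ is reconstructed, proving Theorem~\ref{thm1}.
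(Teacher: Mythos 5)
Your route is the same as the paper's: take the identity of Lemma~\ref{lem2.5}, note that its right-hand side $R(x)$ is determined by the deck, and argue that the equation $(m-n)\tau+x\tau'=R(x)$ pins down $\tau(G;x)$. Your coefficient computation $(m-k)a_k=r_k$ is correct, and it is actually more careful than what the paper writes: the paper evaluates at $x=0$ and declares that the differential equation ``has a unique solution,'' which is precisely the step that fails when $m<n$, because $x^{n-m}$ is then a polynomial solution of the homogeneous equation $(m-n)f+xf'=0$ that vanishes at $x=0$, so the initial value $\tau(G;0)$ cannot exclude it. (When $m>n$ the operator has trivial kernel on polynomials of degree $n$ and everything is fine.) So you have correctly isolated the one genuinely delicate point, the resonant coefficient $a_m$.

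The gap is in your resolution of that resonance. You propose to recover $a_m$ from the assertion that the subleading coefficients of $\tau(G;x)$ are ``explicit symmetric functions of the vertex degrees and of $m$,'' all readable from the cards. That is true only for the first few coefficients (roughly $k\le 2$); for general $k$ the coefficient $c_{2,k}$ of the second immanantal polynomial counts weighted subgraph configurations on $k$ vertices (cf.\ Merris) and is not a function of the degree sequence alone. In the problematic range $0\le m<n$ the index $m$ can be as large as $n-1$, so the coefficient you need is a deep one, and you give no argument that it is determined by the deck; invoking Kelly-type subgraph-counting results would be a substantial separate argument that you have not supplied. As written, your proof therefore establishes the theorem only for $m>n$. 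You should be aware that the paper's own justification of uniqueness suffers from exactly the defect you uncovered, so your analysis is valuable --- but the patch you offer does not close the case $m<n$, and an actual proof must either supply a genuine independent determination of $a_m$ or restrict the statement.
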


\section{A solution of Problem \ref{prob2}}\label{sec3}
In this section, a solution to the Problem \ref{prob2} will be proved. First, we present some notation and lemmas as follows.
\begin{defn}\label{def3.1}
Let $R=(r_{st})_{n\times n}$ be a matrix of order $n$ over the complex field. Define a matrix $R_{ij}=(r_{st}^{ij})_{n\times n}$, where $1\leq i,j\leq n$, and
$$
r_{st}^{ij}=
  \begin{cases}
   r_{st}, & \text{if $(s,t)\neq(i,j)$},\\
   0, & \text{otherwise}.
  \end{cases}
$$
\end{defn}
\begin{example}\label{exam2}
Let $R$ be an $n\times n$ matrix as follows:
$$
R=
\left(
  \begin{array}{cccc}
    r_{11} & r_{12} &  \cdots &  r_{1n} \\
    r_{21} & r_{22} &  \cdots &  r_{2n} \\
    \vdots & \vdots &  \ddots &  \vdots \\
    r_{n1} & r_{n2} &  \cdots &  r_{nn} \\
  \end{array}
\right),
$$
then
$$
R_{12}=
\left(
  \begin{array}{cccc}
   r_{11} & 0 &  \cdots &  r_{1n} \\
   r_{21} & r_{22} &  \cdots &  r_{2n} \\
   \vdots & \vdots &  \ddots &  \vdots \\
   r_{n1} & r_{n2} &  \cdots &  r_{nn} \\
  \end{array}
\right),
R_{11}=
\left(
  \begin{array}{cccc}
    0 & r_{12} &  \cdots &  r_{1n} \\
    r_{21} & r_{22} &  \cdots &  r_{2n} \\
    \vdots & \vdots &  \ddots &  \vdots \\
    r_{n1} & r_{n2} &  \cdots &  r_{nn} \\
  \end{array}
\right).
$$
\end{example}
By Example \ref{exam2}, if $r_{ij}=0$, then $R=R_{ij}$.  Zhang et al. \cite{13} give a result as follows. For any $1\leqslant i, j\leqslant n$,
\begin{eqnarray}\label{equ13}
\det(R_{ij})&=&\det(R)-(-1)^{i+j}r_{ij}\det(R_{j}^{i}).
\end{eqnarray}

\begin{lemma}\label{lem3.2}(\cite{13})
Let $R=(r_{st})_{n\times n}$ and $R_{ij}$ be defined as in Definition \ref{def3.1}. Then the determinant of $R$ satisfies:
\begin{eqnarray*}
(n^{2}-n)\det(R)=\sum_{1\leqslant i,j\leqslant n}\det(R_{ij}).
\end{eqnarray*}
\end{lemma}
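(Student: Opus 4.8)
The plan is to mimic the strategy used for the symmetric case in Lemma \ref{lem2.2}, but now working with \emph{all} ordered pairs $(i,j)$ with $1\leqslant i,j\leqslant n$, since in the non-symmetric setting each off-diagonal position is an independent entry. The starting point is the identity \eqref{equ13}, namely $\det(R_{ij})=\det(R)-(-1)^{i+j}r_{ij}\det(R_j^i)$, which holds for all $i,j$ including $i=j$ (when $i=j$ the factor $(-1)^{i+j}$ is $+1$, so this reads $\det(R_{ii})=\det(R)-r_{ii}\det(R_i^i)$, matching the Laplace expansion). First I would sum \eqref{equ13} over all $n^2$ ordered pairs $(i,j)$:
\begin{eqnarray*}
\sum_{1\leqslant i,j\leqslant n}\det(R_{ij})
 = n^2\det(R)-\sum_{1\leqslant i,j\leqslant n}(-1)^{i+j}r_{ij}\det(R_j^i).
\end{eqnarray*}
So the lemma reduces to showing that the correction term equals exactly $n\det(R)$, i.e.
\begin{eqnarray*}
\sum_{1\leqslant i,j\leqslant n}(-1)^{i+j}r_{ij}\det(R_j^i)=n\det(R).
\end{eqnarray*}

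The key observation is that for each fixed row index $i$, the inner sum over $j$ is a cofactor (Laplace) expansion of $\det(R)$ along the $i$th row: indeed $(-1)^{i+j}\det(R_j^i)$ is precisely the $(i,j)$ cofactor of $R$, so $\sum_{j=1}^{n}(-1)^{i+j}r_{ij}\det(R_j^i)=\det(R)$ for every $i\in\{1,\dots,n\}$. Summing this over the $n$ choices of $i$ gives $\sum_{i,j}(-1)^{i+j}r_{ij}\det(R_j^i)=n\det(R)$, which is exactly what is needed. Substituting back yields $\sum_{1\leqslant i,j\leqslant n}\det(R_{ij})=n^2\det(R)-n\det(R)=(n^2-n)\det(R)$, completing the proof.

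I do not anticipate a genuine obstacle here; the only point requiring a little care is the bookkeeping of the diagonal terms, to confirm that \eqref{equ13} is being applied uniformly over \emph{all} ordered pairs (not just $i\neq j$) and that the cofactor-expansion identity is invoked with the correct sign convention $(-1)^{i+j}$. Once that is pinned down, the result is an immediate double-counting argument: summing the single-entry deletion formula over all positions, and recognizing the arising alternating sums as $n$ copies of a Laplace expansion of $\det(R)$. This is cleaner than the symmetric case of Lemma \ref{lem2.2}, where the pairing of $(i,j)$ with $(j,i)$ forces the extra quadratic terms $b_{ij}^2\det(B_{i,j}^{i,j})$; in the digraph setting those cross terms simply do not appear.
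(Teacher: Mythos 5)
Your proof is correct. The paper itself does not reprove Lemma \ref{lem3.2} (it is quoted from \cite{13}), but your argument --- summing the single-entry deletion identity \eqref{equ13} over all $n^{2}$ ordered pairs and recognizing each inner sum $\sum_{j}(-1)^{i+j}r_{ij}\det(R_{j}^{i})$ as the Laplace expansion of $\det(R)$ along row $i$, contributing $n\det(R)$ in total --- is exactly the natural (and essentially the only) route, and your remark about why the quadratic correction terms of Lemma \ref{lem2.2} disappear in the non-symmetric setting is accurate.
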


There exists a relation between $d_{2}(R)$ and $d_{2}(R_{ij})$ as follows.

\begin{lemma}\label{lem3.3}
Let $R=(r_{st})_{n\times n}$ and $R_{ij}$ be defined as in Definition \ref{def3.1}. Then the second immanant of $R$ and $d_{2}(R_{ij})$ satisfies:
\begin{eqnarray*}
(n^{2}-n)d_{2}(R)=\sum_{1\leqslant i,j\leqslant n}d_{2}(R_{ij}).
\end{eqnarray*}
\end{lemma}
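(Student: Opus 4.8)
The plan is to follow the template of the proof of Lemma~\ref{lem2.3}: expand each $d_2(R_{ij})$ by the identity~(\ref{equ1}), sum over all pairs $(i,j)$, and reduce the resulting double sum to the determinantal identity of Lemma~\ref{lem3.2}. Writing $r_{uu}^{ij}$ for the $(u,u)$-entry of $R_{ij}$, identity~(\ref{equ1}) gives
\begin{eqnarray*}
\sum_{1\le i,j\le n} d_{2}(R_{ij}) = \sum_{1\le i,j\le n}\sum_{u=1}^{n} r_{uu}^{ij}\det\big((R_{ij})_{u}^{u}\big) - \sum_{1\le i,j\le n}\det(R_{ij}),
\end{eqnarray*}
and by Lemma~\ref{lem3.2} the last sum is $(n^{2}-n)\det(R)$; so the work is to evaluate the double sum $S=\sum_{i,j}\sum_{u} r_{uu}^{ij}\det((R_{ij})_{u}^{u})$.

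I would evaluate $S$ by splitting, for each fixed $u$, the pairs $(i,j)$ into three classes. First, if $i=j=u$ then $r_{uu}^{ij}=0$ and the term vanishes. Second, if exactly one of $i,j$ equals $u$ --- there are $2n-2$ such pairs --- then the zeroed entry $(i,j)$ lies in the deleted $u$-th row or $u$-th column, so $(R_{ij})_{u}^{u}=R_{u}^{u}$ while $r_{uu}^{ij}=r_{uu}$; these contribute $(2n-2)\,r_{uu}\det(R_{u}^{u})$. Third, if $i\ne u$ and $j\ne u$ --- there are $(n-1)^{2}$ such pairs --- then $r_{uu}^{ij}=r_{uu}$ and $(R_{ij})_{u}^{u}$ is exactly the $(n-1)\times(n-1)$ matrix $R_{u}^{u}$ with a single entry set to zero; summing $\det((R_{ij})_{u}^{u})$ over these pairs is precisely the right-hand side of Lemma~\ref{lem3.2} applied to the matrix $R_{u}^{u}$ of order $n-1$, hence equals $\big((n-1)^{2}-(n-1)\big)\det(R_{u}^{u})=(n-1)(n-2)\det(R_{u}^{u})$.

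Collecting the two surviving contributions, the coefficient of $r_{uu}\det(R_{u}^{u})$ in $S$ is $(2n-2)+(n-1)(n-2)=n(n-1)$, so $S=n(n-1)\sum_{u}r_{uu}\det(R_{u}^{u})$. Substituting back and using~(\ref{equ1}) once more,
\begin{eqnarray*}
\sum_{1\le i,j\le n} d_{2}(R_{ij}) = n(n-1)\sum_{u=1}^{n} r_{uu}\det(R_{u}^{u}) - (n^{2}-n)\det(R) = (n^{2}-n)\,d_{2}(R),
\end{eqnarray*}
which is the claim.

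The only point requiring care is the case analysis inside $S$: one must notice that the $i=j=u$ term is killed by the vanishing coefficient (so that exactly $2n-2$, not $2n-1$, pairs fall in the ``deleted-entry'' class), and one must recognize the remaining $(n-1)^{2}$-term block sum as an instance of Lemma~\ref{lem3.2} for the smaller matrix $R_{u}^{u}$. Beyond this bookkeeping --- and the fact that $R$ is now a general, not symmetric, matrix, so the single-entry perturbation $R_{ij}$ replaces the symmetric perturbation $B_{[ij]}$ of Lemma~\ref{lem2.3} --- no new idea is needed relative to the determinantal identity of Lemma~\ref{lem3.2}.
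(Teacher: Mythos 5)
Your proof is correct, and while it shares the paper's overall skeleton --- expand each $d_{2}(R_{ij})$ via (\ref{equ1}), dispatch $\sum_{i,j}\det(R_{ij})$ with Lemma \ref{lem3.2}, then evaluate the remaining double sum $S=\sum_{i,j}\sum_{u}r_{uu}^{ij}\det((R_{ij})_{u}^{u})$ --- you handle $S$ by a genuinely different and cleaner device. The paper expands each $\det((R_{ij})_{u}^{u})$ individually using (\ref{equ13}), which produces signed correction terms $(-1)^{i+j}r_{ij}\det(R_{u,j}^{u,i})$, and then must recognize that summing these over $i,j\neq u$ recovers $(n-1)\det(R_{u}^{u})$ via Laplace expansion along each of the $n-1$ rows of $R_{u}^{u}$. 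You instead observe that for $i,j\neq u$ the matrix $(R_{ij})_{u}^{u}$ is exactly the single-entry perturbation $(R_{u}^{u})_{i'j'}$ of the $(n-1)\times(n-1)$ matrix $R_{u}^{u}$, so the entire block of $(n-1)^{2}$ terms is one application of Lemma \ref{lem3.2} at order $n-1$, yielding $(n-1)(n-2)\det(R_{u}^{u})$ with no sign bookkeeping at all; together with the $2n-2$ pairs where the zeroed entry falls in the deleted row or column, this gives the coefficient $n(n-1)$ directly. Your recursive reuse of Lemma \ref{lem3.2} on the principal submatrix buys a shorter, less error-prone computation; the paper's route is more explicit but requires the extra cofactor identity. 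Both are valid, and your case count ($1$ vanishing pair, $2n-2$ boundary pairs, $(n-1)^{2}$ interior pairs, totalling $n^{2}$) checks out.
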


\begin{proof}
Denote by $r_{uu}^{*}$ the $(u,u)$-entry in matrix $R_{ij}$. By (\ref{equ1}) and Lemma \ref{lem3.2}, we have
\begin{eqnarray}\label{equ14}
\sum_{1\leqslant i, j\leqslant n}d_{2}(R_{ij})
&=&\sum_{1\leqslant i, j\leqslant n}\sum_{u=1}^{n}r_{uu}^{*}\det((R_{ij})_{u}^{u})-\sum_{1\leqslant i, j\leqslant n}\det(R_{ij})\notag\\
&=&\sum_{1\leqslant i, j\leqslant n}\sum_{u=1}^{n}r_{uu}^{*}\det((R_{ij})_{u}^{u})-({n^{2}-n})\det (R),
\end{eqnarray}
\begin{eqnarray}\label{equ15}
\sum_{1\leqslant i, j\leqslant n}\sum_{u=1}^{n}r_{uu}^{*}\det((R_{ij})_{u}^{u})=\sum_{i=1}^{n}\sum_{u=1}^{n}r_{uu}^{*}\det((R_{ii})_{u}^{u})
+\sum_{\substack{1\leqslant i, j\leqslant n\\i\neq j}}\sum_{u=1}^{n}r_{uu}^{*}\det((R_{ij})_{u}^{u}).
\end{eqnarray}
For simplicity of presentation, in (\ref{equ15}), define
$$
\Theta_{1}=\sum_{i=1}^{n}\sum_{u=1}^{n}r_{uu}^{*}\det((R_{ii})_{u}^{u})~{\rm and}~
\Theta_{2}=\sum_{\substack{1\leqslant i, j\leqslant n\\i\neq j}}\sum_{u=1}^{n}r_{uu}^{*}\det((R_{ij})_{u}^{u}).~
$$
In $\Theta_{1}$, we note that if $u=i$ then $\Theta_{1}=0$. Assume that $u\neq i$. By (\ref{equ13}), we have
\begin{eqnarray*}
\Theta_{1}&=&\sum_{i=1}^{n}\sum_{u=1}^{n}r_{uu}\det(R_{u}^{u})-\sum_{i=1}^{n}\sum_{u=1}^{n}r_{uu}r_{ii}\det(R_{u,i}^{u,i})\\
&=&(n-1)\sum_{u=1}^{n}r_{uu}\det(R_{u}^{u})-\sum_{i=1}^{n}\sum_{u=1}^{n}r_{uu}r_{ii}\det(R_{u,i}^{u,i}).
\end{eqnarray*}
In $\Theta_{2}$, we note that if $u=i$ then
$$\Theta_{2}=(n-1)\sum_{u=1}^{n}r_{uu}\det(R_{u}^{u}),$$
and if $u=j$ then
$$\Theta_{2}=(n-1)\sum_{u=1}^{n}r_{uu}\det(R_{u}^{u}).$$
Assume that $u\neq i,j$. By (\ref{equ13}), we have
\begin{eqnarray*}
\Theta_{2}&=&\sum_{\substack{1\leqslant i, j\leqslant n\\i\neq j}}\sum_{u=1}^{n}r_{uu}\det(R_{u}^{u}) -\sum_{\substack{1\leqslant i, j\leqslant n\\i\neq j}}\sum_{u=1}^{n}r_{uu}(-1)^{i+j}r_{ij}\det(R_{u,j}^{u,i})\\
&=&(n^{2}-3n+2)\sum_{u=1}^{n}r_{uu}\det(R_{u}^{u}) -\sum_{\substack{1\leqslant i, j\leqslant n\\i\neq j}}\sum_{u=1}^{n}r_{uu}(-1)^{i+j}r_{ij}\det(R_{u,j}^{u,i}).
\end{eqnarray*}
Combining arguments above, we obtain that
\begin{eqnarray}\label{equ16}
&~&\sum_{1\leqslant i, j\leqslant n}\sum_{u=1}^{n}r_{uu}^{*}\det((R_{ij})_{u}^{u})\notag \\ &=&(n^{2}-1)\sum_{u=1}^{n}r_{uu}\det(R_{u}^{u})-\sum_{\substack{1\leqslant i,j\leqslant n\\i,j\neq u}} \sum_{u=1}^{n}r_{uu}(-1)^{i+j}r_{ij}\det(R_{u,j}^{u,i}).
\end{eqnarray}
By substituting (\ref{equ16}) into (\ref{equ14}), we obtain that
\begin{eqnarray*}
&~&\sum_{1\leqslant i, j\leqslant n}d_{2}(R_{ij})\\
&=&(n^{2}-1)\sum_{u=1}^{n}r_{uu}\det(R_{u}^{u})-\sum_{\substack{1\leqslant i,j\leqslant n\\i,j\neq u}} \sum_{u=1}^{n}r_{uu}(-1)^{i+j}r_{ij}\det(R_{u,j}^{u,i})-({n^{2}-n})\det(R)\\
&=&n^{2}d_{2}(R)-\sum_{u=1}^{n}r_{uu}\det(R_{u}^{u})-(n-1)\sum_{u=1}^{n}r_{uu}\det(R_{u}^{u})+n\det(R)\\
&=&n^{2}d_{2}(R)-nd_{2}(R).
\end{eqnarray*}
\end{proof}

By Lemma \ref{lem3.3}, we can obtain a corollary as follows.

\begin{cor}\label{cor3.4}
Let $R=(r_{st})_{n\times n}$ and $R_{ij}$ be defined as in Definition \ref{def3.1}. Suppose that $l$ is the number of non-zero of entries in $R$. Then
\begin{eqnarray*}
(l-n)d_{2}(R)=\sum_{(i,j)\in I}d_{2}(R_{ij}),
\end{eqnarray*}
where $I=\{(i,j)|r_{ij}\neq0,1\leqslant i, j\leqslant n\}$.
\end{cor}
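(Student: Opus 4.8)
The plan is to obtain Corollary \ref{cor3.4} from Lemma \ref{lem3.3} by exactly the splitting-and-counting argument used to pass from Lemma \ref{lem2.3} to Lemma \ref{lem2.4}. First I would start from the identity of Lemma \ref{lem3.3},
$$(n^{2}-n)d_{2}(R)=\sum_{1\leqslant i,j\leqslant n}d_{2}(R_{ij}),$$
noting that the sum on the right ranges over all $n^{2}$ ordered index pairs $(i,j)$. Then I would partition this sum according to whether the entry $r_{ij}$ vanishes:
$$\sum_{1\leqslant i,j\leqslant n}d_{2}(R_{ij})=\sum_{\substack{1\leqslant i,j\leqslant n\\ r_{ij}\neq 0}}d_{2}(R_{ij})+\sum_{\substack{1\leqslant i,j\leqslant n\\ r_{ij}=0}}d_{2}(R_{ij}).$$

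The key observation, already recorded just after Example \ref{exam2}, is that if $r_{ij}=0$ then $R_{ij}=R$; hence every summand in the second sum equals $d_{2}(R)$. Since $R$ has exactly $l$ non-zero entries, it has $n^{2}-l$ zero entries, so the second sum is $(n^{2}-l)d_{2}(R)$, while the first sum is by definition $\sum_{(i,j)\in I}d_{2}(R_{ij})$. Substituting these back into the identity of Lemma \ref{lem3.3} gives
$$(n^{2}-n)d_{2}(R)=\sum_{(i,j)\in I}d_{2}(R_{ij})+(n^{2}-l)d_{2}(R),$$
and transposing the last term to the left-hand side yields $(l-n)d_{2}(R)=\sum_{(i,j)\in I}d_{2}(R_{ij})$, which is the claim.

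There is no genuinely hard step here; the corollary is an immediate consequence of Lemma \ref{lem3.3} once the zero/non-zero split is performed, and I expect the only point requiring care to be the index bookkeeping. In contrast with the symmetric case of Lemma \ref{lem2.4}, where $B_{[ij]}$ zeroes out the conjugate pair of entries $(i,j)$ and $(j,i)$ — forcing one to count off-diagonal positions in pairs, to treat diagonal positions separately (whence the auxiliary constants $2m$ and $c$ there), and to carry a $\sum b_{ij}^{2}d_{2}(B_{i,j}^{i,j})$ correction term — here $R_{ij}$ modifies the single entry $(i,j)$ and the sum in Lemma \ref{lem3.3} is already over all ordered pairs. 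Consequently no pairing, no separate diagonal count, and no squared-entry correction appear, so the single count $l=|I|$ of non-zero entries of $R$ is all that is needed; this is why the statement of Corollary \ref{cor3.4} is cleaner than that of Lemma \ref{lem2.4}.
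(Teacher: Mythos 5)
Your proof is correct and is exactly the argument the paper intends: the corollary is stated without an explicit proof, but it follows from Lemma \ref{lem3.3} by the same zero/non-zero splitting used in the proof of Lemma \ref{lem2.4}, which is precisely what you carry out (and your count $(n^{2}-l)d_{2}(R)$ for the vanishing entries and the resulting cancellation are both right). Your closing remark on why no squared-entry correction term appears here, unlike in Lemma \ref{lem2.4}, is also accurate.
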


Let $\eta\in\{0,1\}$ and $\alpha\in\{-1,1\}$, and let ~$\overrightarrow{G}=(V(\overrightarrow{G}),~E(\overrightarrow{G}))$ be a digraph without loops or multiple arcs, where vertex set $V(\overrightarrow{G})=\{v_{1},~v_{2},\ldots,v_{n}\}$, and arc set ~$E(\overrightarrow{G})=\{\overrightarrow{e_{1}},~\overrightarrow{e_{2}},\ldots,\overrightarrow{e_{m}}\}$. Define
\begin{eqnarray}\label{equ17}
g(\overrightarrow{G};x)=d_{2} (xI_{n}-\eta D(\overrightarrow{G})-\alpha A(\overrightarrow{G})),
\end{eqnarray}
where $D(\overrightarrow{G})$ and $A(\overrightarrow{G})$ are the vertex in-degree diagonal matrix and the adjacency matrix of $\overrightarrow{G}$, respectively. Suppose that $d^{-}(v_{i})$ is the in-degree of $v_{i}$ of $\overrightarrow{G}$ and $\overrightarrow{e}=(v_{s},v_{t})$. A routine computation gives rise to the derivative $g'(\overrightarrow{G};x)$ of $g(\overrightarrow{G};x)$:
\begin{eqnarray}\label{equ18}
&~&g'(\overrightarrow{G};x)\notag \\
&=&[\sum_{u=1}^{n}(x-\eta d^{-}(v_{u}))\det[(xI_{n}-\eta D(\overrightarrow{G})-\alpha A(\overrightarrow{G}))_{u}^{u}]]'-[\det(xI_{n}-\eta D(\overrightarrow{G})-\alpha A(\overrightarrow{G}))]'\notag \\
&=&\sum_{u=1}^{n}\det[(xI_{n}-\eta D(\overrightarrow{G})-\alpha A(\overrightarrow{G}))_{u}^{u}]+\sum_{u=1}^{n}\sum_{\substack{i=1\\i\neq u}}^{n}(x-\eta d^{-}(v_{i}))\notag\\
&~&\cdot\det[(xI_{n}-\eta D(\overrightarrow{G})-\alpha A(\overrightarrow{G}))_{i,u}^{i,u}]]-\sum_{u=1}^{n}\det[(xI_{n}-\eta D(\overrightarrow{G})-\alpha A(\overrightarrow{G}))_{u}^{u}]\notag \\
&=&\sum_{u=1}^{n}\det[(xI_{n}-\eta D(\overrightarrow{G})-\alpha A(\overrightarrow{G}))_{u}^{u}]+\sum_{u=1}^{n}d_{2}[(xI_{n}-\eta D(\overrightarrow{G})-\alpha A(\overrightarrow{G}))_{u}^{u}].
\end{eqnarray}

We obtain a relationship between $g(\overrightarrow{G};x)$ and $g'(\overrightarrow{G};x)$ as follows.

\begin{lemma}\label{lem3.5}
Let $g(\overrightarrow{G};x)$ be a polynomial defined as in (\ref{equ17}). Then
\begin{eqnarray*}
(m-n)g(\overrightarrow{G};x)+xg'(\overrightarrow{G};x)=\sum_{\overrightarrow{e}\in E(\overrightarrow{G})}g(\overrightarrow{G}-\overrightarrow{e};x).
\end{eqnarray*}
\end{lemma}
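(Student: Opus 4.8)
The plan is to mirror the proof of Lemma~\ref{lem2.5} almost verbatim, but using the directed-graph machinery of Corollary~\ref{cor3.4} in place of Lemma~\ref{lem2.4}. Set $M=xI_{n}-\eta D(\overrightarrow{G})-\alpha A(\overrightarrow{G})$; since $\overrightarrow{G}$ has $m$ arcs and no loops, $M$ has exactly $m$ non-zero off-diagonal entries and all $n$ diagonal entries are non-zero (they equal $x-\eta d^{-}(v_{u})$, which we treat as non-zero polynomials), so $M$ has $l=m+n$ non-zero entries. Applying Corollary~\ref{cor3.4} to $M$ gives
\begin{eqnarray*}
m\cdot d_{2}(M)=\sum_{(i,j)\in I}d_{2}(M_{ij})
=\sum_{u=1}^{n}d_{2}(M_{uu})+\sum_{\overrightarrow{e}=(v_{s},v_{t})\in E(\overrightarrow{G})}d_{2}(M_{st}),
\end{eqnarray*}
where the first sum ranges over the diagonal positions (all non-zero) and the second over the off-diagonal positions carrying the arcs of $\overrightarrow{G}$. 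The point is that $M_{st}$, the matrix obtained by zeroing out the $(s,t)$-entry, is precisely $xI_{n}-\eta D(\overrightarrow{G})-\alpha A(\overrightarrow{G}-\overrightarrow{e})$ except that the diagonal still records the in-degree $d^{-}(v_{t})$ computed in $\overrightarrow{G}$ rather than in $\overrightarrow{G}-\overrightarrow{e}$; when $\eta=0$ this is already exactly $g(\overrightarrow{G}-\overrightarrow{e};x)$, and when $\eta=1$ one must additionally correct the single diagonal entry in row/column $t$ from $x-d^{-}(v_{t})$ to $x-(d^{-}(v_{t})-1)$. This bookkeeping is the one genuinely new wrinkle compared with the undirected adjacency case of Lemma~\ref{lem2.5}.

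Next I would evaluate the two sums on the right. For the diagonal sum, write $M_{uu}$ as $M$ with its $(u,u)$-entry replaced by $0$; expanding $d_{2}(M_{uu})$ via \eqref{equ1} and the cofactor identity \eqref{equ13} (applied to delete the single changed entry) collapses it, exactly as the computation of $\Delta_{3}$ in Lemma~\ref{lem2.5}, to
\begin{eqnarray*}
d_{2}(M_{uu})=d_{2}(M)-(x-\eta d^{-}(v_{u}))\det(M_{u}^{u})-(x-\eta d^{-}(v_{u}))\,d_{2}(M_{u}^{u}),
\end{eqnarray*}
so that $\sum_{u=1}^{n}d_{2}(M_{uu})=n\,g(\overrightarrow{G};x)-A-B$ where $A=\sum_u (x-\eta d^-(v_u))\det(M_u^u)$ and $B=\sum_u (x-\eta d^-(v_u))\,d_2(M_u^u)$; comparing with \eqref{equ18} we recognise $A+B=x\,g'(\overrightarrow{G};x)$ exactly (the derivative formula \eqref{equ18} was engineered to give $g'(\overrightarrow{G};x)=\sum_u\det(M_u^u)+\sum_u d_2(M_u^u)$, and multiplying the local expansion of each $d_2(M_{uu})$ by the appropriate diagonal weight reproduces $A$ and $B$ term by term — this is the step that needs a careful line-by-line check but no cleverness). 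For the off-diagonal sum, in the case $\eta=0$ we have $d_{2}(M_{st})=g(\overrightarrow{G}-\overrightarrow{e};x)$ directly; in the case $\eta=1$ we use that changing one diagonal entry of a matrix $N$ by $1$ changes $d_{2}(N)$ by an amount expressible through $\det(N_t^t)$ and $d_2(N_t^t)$ via \eqref{equ1} and \eqref{equ13}, and one checks these correction terms telescope against the corresponding pieces of $A$ and $B$ — indeed this is why the identity is stated uniformly for $\eta\in\{0,1\}$ in \eqref{equ17}.

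Assembling the pieces gives
\begin{eqnarray*}
m\,g(\overrightarrow{G};x)=n\,g(\overrightarrow{G};x)-x\,g'(\overrightarrow{G};x)+\sum_{\overrightarrow{e}\in E(\overrightarrow{G})}g(\overrightarrow{G}-\overrightarrow{e};x),
\end{eqnarray*}
which rearranges to the claimed identity $(m-n)g(\overrightarrow{G};x)+x\,g'(\overrightarrow{G};x)=\sum_{\overrightarrow{e}}g(\overrightarrow{G}-\overrightarrow{e};x)$. The main obstacle is not conceptual but organisational: one must keep the degree-diagonal terms $x-\eta d^{-}(v_{u})$ threaded correctly through every cofactor expansion so that the weighted local expansions of the $d_{2}(M_{uu})$ reassemble into exactly $x\,g'(\overrightarrow{G};x)$ as given by \eqref{equ18}, and so that in the $\eta=1$ case the single mismatched in-degree entry in each $M_{st}$ is accounted for — I would handle this by introducing abbreviations $\Delta$ for the diagonal summand and for the arc summand, as in Lemmas~\ref{lem2.3} and~\ref{lem2.5}, and dispatching the two cases $u=i$ versus $u\neq i$ (and then $\eta=0$ versus $\eta=1$) separately before recombining.
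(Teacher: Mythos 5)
Your proposal follows the same route as the paper's proof: apply Corollary~\ref{cor3.4} to $M=xI_{n}-\eta D(\overrightarrow{G})-\alpha A(\overrightarrow{G})$ (which has $l=m+n$ nonzero entries), split the resulting sum into the $n$ diagonal positions and the $m$ arc positions, expand each $d_{2}(M_{uu})$ and $d_{2}(M_{st})$ via \eqref{equ1} and \eqref{equ13}, and recognise $g'(\overrightarrow{G};x)$ from \eqref{equ18}; the treatment of the in-degree mismatch in $M_{st}$ is exactly the paper's computation of $\Theta_{4}$. One intermediate sentence is not literally true, though: $A+B=\sum_{u}(x-\eta d^{-}(v_{u}))\bigl[\det(M_{u}^{u})+d_{2}(M_{u}^{u})\bigr]$ equals $x\,g'(\overrightarrow{G};x)$ only when $\eta=0$; for $\eta=1$ it equals $x\,g'(\overrightarrow{G};x)-\sum_{u}d^{-}(v_{u})\bigl[\det(M_{u}^{u})+d_{2}(M_{u}^{u})\bigr]$, and that leftover is precisely what cancels the corrections $-\eta\sum_{t}d^{-}(v_{t})\bigl[\det(M_{t}^{t})+d_{2}(M_{t}^{t})\bigr]$ coming from the arc sum (each head $v_{t}$ occurring $d^{-}(v_{t})$ times). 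Since you invoke exactly this telescoping in the following sentence, the argument closes correctly, but as written the word ``exactly'' contradicts the telescoping step and should be dropped; with that repaired, your proof is the paper's proof.
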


\begin{proof}
Denote by $D(\overrightarrow{G}-\overrightarrow{e})$ and $A(\overrightarrow{G}-\overrightarrow{e})$ the vertex in-degree diagonal matrix and the adjacency matrix of $\overrightarrow{G}-\overrightarrow{e}$, respectively. Let $D(\overrightarrow{G})^{(i)}=diag(d^{-}(v_{1}),\ldots,d^{-}(v_{i-1}),0,d^{-}(v_{i+1}),\ldots,d^{-}(v_{n}))$. Denote by $x''_{uu}$ the $(u,u)$-entry in matrix $xI_{n}^{(i)}-\eta D(\overrightarrow{G})^{(i)}-\alpha A(\overrightarrow{G})$. Since $\overrightarrow{G}$ has $m$ arcs, $xI_{n}-\eta D(\overrightarrow{G})-\alpha A(\overrightarrow{G})$ has $m+n$ non-zero entries. By Corollary \ref{cor3.4}, we have
\begin{eqnarray}\label{equ19}
&~&m\cdot d_{2}(xI_{n}-\eta D(\overrightarrow{G})-\alpha A(\overrightarrow{G}))\notag \\
&=&\sum_{i=1}^{n}d_{2}(xI_{n}^{(i)}-\eta D(\overrightarrow{G})^{(i)}-\alpha A(\overrightarrow{G}))+\sum_{\overrightarrow{e}\in E(\overrightarrow{G})}d_{2}(xI_{n}-\eta D(\overrightarrow{G})-\alpha A(\overrightarrow{G}-\overrightarrow{e})).
\end{eqnarray}
For simplicity of presentation, in (\ref{equ19}), define
$$
\Theta_{3}=d_{2}(xI_{n}^{(i)}-\eta D(\overrightarrow{G})^{(i)}-\alpha A(\overrightarrow{G}))~{\rm and}~
\Theta_{4}=d_{2}(xI_{n}-\eta D(\overrightarrow{G})-\alpha A(\overrightarrow{G}-\overrightarrow{e})).
$$
By (\ref{equ1}), we have
\begin{eqnarray*}
\Theta_{3}=\sum_{u=1}^{n}x''_{uu}\det[(xI_{n}^{(i)}-\eta D(\overrightarrow{G})^{(i)}-\alpha A(\overrightarrow{G}))_{u}^{u}]-\det(xI_{n}^{(i)}-\eta D(\overrightarrow{G})^{(i)}-\alpha A(\overrightarrow{G})),
\end{eqnarray*}
\begin{eqnarray*}
\Theta_{4}&=&\sum_{u=1}^{n}(x-\eta d^{-}(v_{u}))\det[(xI_{n}-\eta D(\overrightarrow{G})-\alpha A(\overrightarrow{G}-\overrightarrow{e}))_{u}^{u}]\\
&~&-\det(xI_{n}-\eta D(\overrightarrow{G})-\alpha A(\overrightarrow{G}-\overrightarrow{e})).
\end{eqnarray*}
In $\Theta_{3}$, we note that if $u=i$ then
\begin{eqnarray*}
\sum_{u=1}^{n}x''_{uu}\det[(xI_{n}^{(i)}-\eta D(\overrightarrow{G})^{(i)}-\alpha A(\overrightarrow{G}))_{u}^{u}]=0,
\end{eqnarray*}
and if $u\neq i$ then
\begin{eqnarray*}
&~&\sum_{u=1}^{n}x''_{uu}\det[(xI_{n}^{(i)}-\eta D(\overrightarrow{G})^{(i)}-\alpha A(\overrightarrow{G}))_{u}^{u}]\\
&=&\sum_{u=1}^{n}x''_{uu}[\det[(xI_{n}-\eta D(\overrightarrow{G})-\alpha A(\overrightarrow{G}))_{u}^{u}]-(x-\eta d^{-}(v_{i}))\det[(xI_{n}-\eta D(\overrightarrow{G})-\alpha A(\overrightarrow{G}))_{i,u}^{i,u}]].
\end{eqnarray*}
We obtain that
\begin{eqnarray*}
\Theta_{3}&=&\sum_{\substack{u=1\\u\neq i}}^{n}x''_{uu}\det[(xI_{n}-\eta D(\overrightarrow{G})-\alpha A(\overrightarrow{G}))_{u}^{u}]-\sum_{\substack{u=1\\u\neq i}}^{n}x''_{uu}(x-\eta d^{-}(v_{i}))\\
&~&\cdot\det[(xI_{n}-\eta D(\overrightarrow{G})-\alpha A(\overrightarrow{G}))_{i,u}^{i,u}]-\det(xI_{n}-\eta D(\overrightarrow{G})-\alpha A(\overrightarrow{G}))\\
&~&+(x-\eta d^{-}(v_{i}))\det[(xI_{n}-\eta D(\overrightarrow{G})-\alpha A(\overrightarrow{G}))_{i}^{i}]\\
&=&d_{2}(xI_{n}-\eta D(\overrightarrow{G})-\alpha A(\overrightarrow{G}))-(x-\eta d^{-}(v_{i}))\det[(xI_{n}-\eta D(\overrightarrow{G})-\alpha A(\overrightarrow{G}))_{i}^{i}]\\
&~&-(x-\eta d^{-}(v_{i})) d_{2}[(xI_{n}-\eta D(\overrightarrow{G})-\alpha A(\overrightarrow{G}))_{i}^{i}].
\end{eqnarray*}
In $\Theta_{4}$, we note that if $u=t$ then
\begin{eqnarray*}
&~&\sum_{u=1}^{n}(x-\eta d^{-}(v_{u}))\det[(xI_{n}-\eta D(\overrightarrow{G})-\alpha A(\overrightarrow{G}-\overrightarrow{e}))_{u}^{u}]\\
&=&(x-\eta d^{-}(v_{t}))\det[(xI_{n}-\eta D(\overrightarrow{G})-\alpha A(\overrightarrow{G}))_{t}^{t}],
\end{eqnarray*}
and if $u\neq t$ then
\begin{eqnarray*}
&~&\sum_{u=1}^{n}(x-\eta d^{-}(v_{u}))\det[(xI_{n}-\eta D(\overrightarrow{G})-\alpha A(\overrightarrow{G}-\overrightarrow{e}))_{u}^{u}]\\
&=&\sum_{u=1}^{n}(x-\eta d^{-}(v_{u}))[\det[(xI_{n}-\eta D(\overrightarrow{G}-\overrightarrow{e})-\alpha A(\overrightarrow{G}-\overrightarrow{e}))_{u}^{u}]\\
&~&-\eta\det[(xI_{n}-\eta D(\overrightarrow{G})-\alpha A(\overrightarrow{G}))_{t,u}^{t,u}]].
\end{eqnarray*}
We obtain that
\begin{eqnarray*}
\Theta_{4}
&=&(x-\eta d^{-}(v_{t}))\det[(xI_{n}-\eta D(\overrightarrow{G})-\alpha A(\overrightarrow{G}))_{t}^{t}]\\
&~&+\sum_{\substack{u=1\\u\neq t}}^{n}(x-\eta d^{-}(v_{u}))\det[(xI_{n}-\eta D(\overrightarrow{G}-\overrightarrow{e})-\alpha A(\overrightarrow{G}-\overrightarrow{e}))_{u}^{u}]\\
&~&-\eta\sum_{\substack{u=1\\u\neq t}}^{n}(x-\eta d^{-}(v_{u}))\det[(xI_{n}-\eta D(\overrightarrow{G})-\alpha A(\overrightarrow{G}))_{t,u}^{t,u}]\\
&~&-\det(xI_{n}-\eta D(\overrightarrow{G}-\overrightarrow{e})-\alpha A(\overrightarrow{G}-\overrightarrow{e}))+\eta\det[(xI_{n}-\eta D(\overrightarrow{G})-\alpha A(\overrightarrow{G}))_{t}^{t}]\\
&=&d_{2}(xI_{n}-\eta D(\overrightarrow{G}-\overrightarrow{e})-\alpha A(\overrightarrow{G}-\overrightarrow{e}))-\eta d_{2}[(xI_{n}-\eta D(\overrightarrow{G})-\alpha A(\overrightarrow{G}))_{t}^{t}]\\
&~&-(x-\eta d^{-}(v_{t})+\eta)\det[(xI_{n}-\eta D(\overrightarrow{G}-\overrightarrow{e})-\alpha A(\overrightarrow{G}-\overrightarrow{e}))_{t}^{t}]\\
&~&+(x-\eta d^{-}(v_{t}))\det[(xI_{n}-\eta D(\overrightarrow{G})-\alpha A(\overrightarrow{G}))_{t}^{t}]]\\
&=&d_{2}(xI_{n}-\eta D(\overrightarrow{G}-\overrightarrow{e})-\alpha A(\overrightarrow{G}-\overrightarrow{e}))-\eta d_{2}[(xI_{n}-\eta D(\overrightarrow{G})-\alpha A(\overrightarrow{G}))_{t}^{t}]\\
&~&-\eta\det[(xI_{n}-\eta D(\overrightarrow{G})-\alpha A(\overrightarrow{G}))_{t}^{t}]].
\end{eqnarray*}
Combining arguments above, we have
\begin{eqnarray}\label{equ20}
&~&\sum_{i=1}^{n}d_{2}(xI_{n}^{(i)}-\eta D(\overrightarrow{G})^{(i)}-\alpha A(\overrightarrow{G}))\notag \\
&=&ng(\overrightarrow{G};x)-\sum_{i=1}^{n}(x-\eta d^{-}(v_{i}))\det[(xI_{n}-\eta D(\overrightarrow{G})-\alpha A(\overrightarrow{G}))_{i}^{i}]\notag\\
&~&-\sum_{i=1}^{n}(x-\eta d^{-}(v_{i}))d_{2}\det[(xI_{n}-\eta D(\overrightarrow{G})-\alpha A(\overrightarrow{G}))_{i}^{i}].
\end{eqnarray}
\begin{eqnarray}\label{equ21}
&~&\sum_{\overrightarrow{e}\in E(\overrightarrow{G})}d_{2}(xI_{n}-\eta D(\overrightarrow{G})-\alpha A(\overrightarrow{G}-\overrightarrow{e}))\notag\\
&=&\sum_{\overrightarrow{e}\in E(\overrightarrow{G})}g(\overrightarrow{G}-\overrightarrow{e};x)-\eta\sum_{t=1}^{n}d^{-}(v_{t}))d_{2}[(xI_{n}-\eta D(\overrightarrow{G})-\alpha A(\overrightarrow{G}))_{t}^{t}]\notag\\
&~&-\eta\sum_{t=1}^{n}d^{-}(v_{t}))\det[(xI_{n}-\eta D(\overrightarrow{G})-\alpha A(\overrightarrow{G}))_{t}^{t}].
\end{eqnarray}
Combining (\ref{equ18}), (\ref{equ19}), (\ref{equ20}) and (\ref{equ21}), we have
\begin{eqnarray}\label{equ22}
mg(\overrightarrow{G};x)=ng(\overrightarrow{G};x)-xg'(\overrightarrow{G};x)+\sum_{\overrightarrow{e}\in E(\overrightarrow{G})}g(\overrightarrow{G}-\overrightarrow{e};x).
\end{eqnarray}
The proof is completed.
\end{proof}

By (\ref{equ22}), we note that if $m\neq n$ then the following equation holds.
\begin{eqnarray*}
g_{i}(\overrightarrow{G};0)=\frac{1}{m-n}\sum_{\overrightarrow{e}\in E(\overrightarrow{G})}g_{i}(\overrightarrow{G}-\overrightarrow{e};0).
\end{eqnarray*}
Thus the differential equation (\ref{equ22}) has an unique solution. This implies the following result.

\begin{theorem}\label{thm2}
Let $\overrightarrow{G}=(V(\overrightarrow{G}),~E(\overrightarrow{G}))$ be a digraph having no loops and no multiple arcs, with vertex set $V(\overrightarrow{G})=\{v_{1},~v_{2},\ldots,v_{n}\}$ and arc set $E(\overrightarrow{G})=\{\overrightarrow{e}_{1},~\overrightarrow{e}_{2},\ldots,\overrightarrow{e}_{m}\}$. If $m\neq n$, then $g_{i}(\overrightarrow{G};x)$ can be reconstructed from $\{g_{i}(\overrightarrow{G}-\overrightarrow{e};x)|e\in E(\overrightarrow{G})\}$ for $i=1,2,3$.
\end{theorem}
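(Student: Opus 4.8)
The plan is to derive Theorem~\ref{thm2} directly from the differential identity of Lemma~\ref{lem3.5}, treating the three cases in one stroke. First I would observe that $g_{1},g_{2},g_{3}$ are the instances of $g$ in~(\ref{equ17}) corresponding to $(\eta,\alpha)=(0,1),(1,-1),(1,1)$, and that, for each such choice, the polynomial $g(\overrightarrow{G}-\overrightarrow{e};x)$ on the right-hand side of Lemma~\ref{lem3.5} is exactly the card $g_{i}(\overrightarrow{G}-\overrightarrow{e};x)$ belonging to the edge deck. So it is enough to fix one pair $(\eta,\alpha)$ and reconstruct the corresponding $g_{i}(\overrightarrow{G};x)$.

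Next I would read Lemma~\ref{lem3.5} as a first-order linear differential equation $x\,y'+(m-n)\,y=f(x)$ for the unknown polynomial $y=g_{i}(\overrightarrow{G};x)$, where $f(x)=\sum_{\overrightarrow{e}\in E(\overrightarrow{G})}g_{i}(\overrightarrow{G}-\overrightarrow{e};x)$ is read off the deck, $m$ is the number of cards, and $n$ is the common degree of the card polynomials; thus the entire equation is deck-determined. Writing $y=\sum_{k=0}^{n}a_{k}x^{k}$ and $f=\sum_{k=0}^{n}f_{k}x^{k}$ and comparing the coefficient of $x^{k}$ gives $(m-n+k)\,a_{k}=f_{k}$ for $0\le k\le n$. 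Since $m\neq n$, every index $k$ with $m-n+k\neq0$ forces $a_{k}=f_{k}/(m-n+k)$; in particular $a_{0}=g_{i}(\overrightarrow{G};0)=f_{0}/(m-n)$ is recovered at once. Doing this for $i=1,2,3$ completes the reconstruction, once the single possibly-singular index $k=n-m$ has been handled.

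The step I expect to be the crux is exactly this index $k=n-m$, where the operator degenerates; it is the reason for the hypothesis $m\neq n$. When $m>n$ the value $n-m$ is negative and never occurs, so the coefficient recursion reconstructs $y$ outright and we are done. When $m<n$ the coefficient $a_{n-m}$ is left undetermined by $(m-n+k)a_{k}=f_{k}$, and I would try to recover it from the data about $g_{i}(\overrightarrow{G};x)$ that is known in advance: its degree is $n$ with leading coefficient $n-1$, its $x^{n-1}$-coefficient equals $-(n-1)\,\mathrm{tr}\!\left(\eta D(\overrightarrow{G})+\alpha A(\overrightarrow{G})\right)$, and its lower coefficients are prescribed polynomial functions of the entries of $\eta D(\overrightarrow{G})+\alpha A(\overrightarrow{G})$, i.e.\ of small subgraph counts of $\overrightarrow{G}$; plugging in the coefficients already reconstructed leaves a single linear equation for $a_{n-m}$. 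The delicate point is to check that the invariant needed to close that last equation is itself reconstructible from the edge deck; granting this, the polynomial solution of the differential equation is unique, and $g_{i}(\overrightarrow{G};x)$ is reconstructed.
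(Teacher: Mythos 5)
Your proposal follows the paper's route exactly: both rest on the identity of Lemma~\ref{lem3.5}, read as the first-order equation $xy'+(m-n)y=f(x)$ with $f(x)=\sum_{\overrightarrow{e}}g_{i}(\overrightarrow{G}-\overrightarrow{e};x)$ computed from the deck, and both reduce the theorem to uniqueness of the polynomial solution. Your coefficient analysis $(m-n+k)a_{k}=f_{k}$ is correct, and your treatment of the case $m>n$ is complete: there every index satisfies $m-n+k>0$, so all of $a_{0},\dots,a_{n}$ are determined and the reconstruction is finished.

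The case $m<n$ is where your write-up stops short, and this is a genuine gap. As you observe, the operator $y\mapsto xy'+(m-n)y$ on polynomials of degree at most $n$ has kernel spanned by $x^{n-m}$, so the deck equation leaves $a_{n-m}$ free; your plan to pin it down from a priori knowledge of the coefficients of $d_{2}(xI_{n}-\eta D(\overrightarrow{G})-\alpha A(\overrightarrow{G}))$ is plausible in spirit, but it is precisely the step you do not carry out, and it is not routine: the coefficient of $x^{n-m}$ is governed by structures on $m$ vertices, which need not be ``small,'' and one must verify that the required invariant is itself computable from the edge deck. Be aware, however, that the paper does not close this gap either: its entire uniqueness argument is the observation that $g_{i}(\overrightarrow{G};0)=\frac{1}{m-n}\sum_{\overrightarrow{e}}g_{i}(\overrightarrow{G}-\overrightarrow{e};0)$ (i.e., that $a_{0}$ is determined), followed by the bare assertion that the differential equation (\ref{equ22}) has a unique solution. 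That assertion is exactly what fails when $m<n$: adding $Cx^{n-m}$ to any solution yields another polynomial solution with the same value at $0$, since $n-m\geq 1$. So your proposal isolates the real difficulty more honestly than the paper does, but neither your sketch nor the paper's proof contains a complete argument for $m<n$; only the case $m>n$ is fully established by the coefficient recursion.
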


\section{Summary}\label{sec4}

 In this paper, we proved that if $|V(G)|\neq |E(G)|$, then the second immanantal polynomial $\tau(G;x)$ of the adjacency matrix of undirected graph $G$ can be reconstructed from the second immanantal polynomials of the adjacency matrix of all subgraphs in $\{G-v_{s}v_{t},G-v_{s}-v_{t}|v_{s}v_{t}\in E(G)\}$. And if $|V(\overrightarrow{G})|\neq |E(\overrightarrow{G})|$, then the second immanantal polynomial $g_{i}(\overrightarrow{G};x)(i=1,2,3)$ of the adjacency matrix and (signless) Laplacian  matrix of digraph $\overrightarrow{G}$ can be reconstructed from the second immanantal polynomials of the adjacency matrix and (signless) Laplacian  matrix of all subgraphs in $\{\overrightarrow{G}-\overrightarrow{e}|\overrightarrow{e}\in E(\overrightarrow{G})\}$, respectively.

To conclude, we have the following problems, which are to be investigated in the future.

1. Can the second immanantal polynomials of the (signless) Laplacian  matrix of undirected graph satisfy the edge reconstruction conjecture?

2. For each hook partition $\lambda$ of $n$, can the immanant function $d_{\lambda}$ satisfy the edge reconstruction conjecture?

\noindent{\bf Data Availability}\\
Data from previous studies were used to support this study.
They are cited at relevant places within the text as references.

\noindent{\bf Conflicts of Interest}\\
The authors declare that they have no conflicts of interest.

\nocite{*}

%
%
%

\end{document}